\documentclass[11pt]{amsart}

\usepackage[margin=1.15in]{geometry}
\usepackage{amsmath,amssymb,amsthm,mathtools}
\usepackage{mathrsfs}
\usepackage{enumitem}
\usepackage{hyperref}
\usepackage[nameinlink,capitalize]{cleveref}
\usepackage{tikz}
\usetikzlibrary{arrows.meta,positioning,calc,fit,decorations.pathmorphing,shapes.geometric}

\hypersetup{
  colorlinks=true,
  linkcolor=blue,
  citecolor=blue,
  urlcolor=blue,
  pdftitle={Swept-Area Metrics on Ropelength-Filtered Knot Spaces},
  pdfauthor={Makoto Ozawa}
}

\theoremstyle{plain}
\newtheorem{theorem}{Theorem}[section]
\newtheorem{proposition}[theorem]{Proposition}
\newtheorem{lemma}[theorem]{Lemma}
\newtheorem{corollary}[theorem]{Corollary}

\theoremstyle{definition}
\newtheorem{definition}[theorem]{Definition}
\newtheorem{question}[theorem]{Question}

\newtheorem{example}[theorem]{Example}

\theoremstyle{remark}
\newtheorem{remark}[theorem]{Remark}

\crefname{theorem}{Theorem}{Theorems}
\crefname{proposition}{Proposition}{Propositions}
\crefname{lemma}{Lemma}{Lemmas}
\crefname{corollary}{Corollary}{Corollaries}
\crefname{definition}{Definition}{Definitions}
\crefname{remark}{Remark}{Remarks}
\crefname{example}{Example}{Examples}
\crefname{question}{Question}{Questions}
\crefname{conjecture}{Conjecture}{Conjectures}

\newcommand{\R}{\mathbb{R}}

\newcommand{\Len}{\operatorname{Len}}
\newcommand{\Thi}{\operatorname{Thi}}
\newcommand{\Rop}{\operatorname{Rop}}
\newcommand{\Isom}{\operatorname{Isom}}
\newcommand{\Diff}{\operatorname{Diff}}

\newcommand{\diam}{\operatorname{diam}}
\newcommand{\TC}{\operatorname{TC}}
\newcommand{\dist}{\operatorname{dist}}
\newcommand{\CRad}{\operatorname{CRad}}
\newcommand{\Pol}{\operatorname{Pol}}
\newcommand{\calG}{\mathcal{G}}

\newcommand{\eps}{\varepsilon}
\newcommand{\Flat}{\mathcal{F}}
\newcommand{\Fill}{\operatorname{Fill}}
\newcommand{\AreaVec}{\mathbf{A}}

\title[Swept-Area Metrics]{Swept-Area Metrics on Ropelength-Filtered Knot Spaces}

\author{Makoto Ozawa}
\address{Department of Natural Sciences, Faculty of Arts and Sciences, Komazawa University, Tokyo, Japan}
\email{w3c@komazawa-u.ac.jp}

\keywords{
knot type; ropelength; thickness; swept area; metric; flat norm;
admissible isotopy; knot space; polygonal knot; Reidemeister graph;
density; compression radius; swept-area length group
}

\subjclass[2020]{Primary 57K10; Secondary 53A04, 49Q10, 49Q15, 49Q20}
\begin{document}
\raggedbottom

\begin{abstract}
This paper develops an intrinsic area geometry on ropelength-filtered spaces of embedded curves.  For a knot type $K$ and a ropelength level $\Lambda$, admissible isotopies pass through representatives of thickness at least one and length at most $\Lambda$.  Their swept area is the parametrized area traced by the moving curve.  Minimizing this quantity, with endpoint alignment optimized over orientation-preserving reparametrizations and Euclidean isometries, defines an extended distance on the corresponding moduli space.

The first main result is that this distance is a genuine extended metric, not merely a pseudometric.  Indeed, the trace of every admissible isotopy defines an integral $2$-current whose boundary is the difference of the endpoint $1$-currents.  The Euclidean-orbit flat distance between these currents is therefore a lower bound for swept-area distance and separates distinct moduli classes.  On each admissible component the metric is finite.  Closed admissible isotopies also give a swept-area length function on a regularity-restricted admissible fundamental group; its dependence on the basepoint is controlled quantitatively.

We derive computable calibration bounds from the projected-area vector, including a Euclidean-invariant lower bound on the quotient space.  These yield exact distance formulas for concentric round unknots and for homothetic ellipses under explicit admissibility hypotheses.  We also prove rigidity of the ideal unknot and a labelled finite-dimensional metric estimate on uniformly non-collinear polygonal strata.  Finally, we relate the metric to filtered topology, merge scales, density and compression radius, and swept-area weighted lifted Reidemeister graphs.  For the latter, the rigorous comparison is with the infimal cost over diagrammatically generic isotopies; comparison with the unrestricted geometric metric follows under an explicit generic-approximation hypothesis.
\end{abstract}

\maketitle


\section{Introduction}

Spaces of embedded curves carry several natural geometric constraints.  In the ropelength setting one fixes a lower bound on thickness and imposes an upper bound on length.  These two requirements produce finite-scale deformation spaces: a curve may move only through embeddings that remain thick and whose length stays below a prescribed level.  Such constrained spaces are central in geometric knot theory, but their path geometry is usually studied only qualitatively, through the existence or non-existence of admissible isotopies.

The purpose of this paper is to put an intrinsic area-type geometry on these constrained deformation spaces.  If
\[
  \Gamma:S^{1}\times[0,1]\longrightarrow \R^{3}
\]
is an isotopy between embedded curves, its swept area is
\[
  A(\Gamma)=
  \int_{S^{1}\times[0,1]}
  \left|\partial_{s}\Gamma\times\partial_{t}\Gamma\right|\,ds\,dt,
\]
whenever the right-hand side is defined.  This is the parametrized area traced by the moving curve.  We minimize this quantity over isotopies satisfying thickness and length constraints and optimize the endpoint alignment over reparametrizations and rigid motions.  The result is an extended metric on the moduli space of unparametrized representatives.

For a knot type $K$, let
\[
  Y_{\Lambda}(K)=
  \{\gamma\in K\mid \Thi(\gamma)\ge 1,\ \Len(\gamma)\le \Lambda\}
  /\bigl(\Diff^{1,1}_+(S^1)\times\Isom^{+}(\R^{3})\bigr)
\]
denote the ropelength-filtered moduli space.  Its points are oriented,
unparametrized representatives modulo orientation-preserving Euclidean
isometries.  Here $\Len(\gamma)$ is length and $\Thi(\gamma)$ is thickness.
The ropelength of a representative is
\[
  \Rop(\gamma)=\frac{\Len(\gamma)}{\Thi(\gamma)},
\]
and the normalization $\Thi\ge 1$ turns the scale-invariant ropelength condition into a length bound.  We use the standard framework for thickness and ropelength; see Litherland et al., Gonzalez--Maddocks, and Cantarella--Kusner--Sullivan \cite{LSDR,GonzalezMaddocks,CKS}.  Quadrisecant lower bounds for ropelength are due to Denne--Diao--Sullivan \cite{DDS}.

The topology of $Y_{\Lambda}(K)$ records which representatives can be joined by thick isotopies below the length level $\Lambda$.  As $\Lambda$ increases, components may merge, leading to filtered-topological invariants such as admissible components, ideal strata, and merge scales.  The present paper refines this connectivity theory by assigning an area cost to admissible paths.  Thus the question is not only whether two representatives can be connected, but how much swept area is forced by any such connection.

There is also a group-theoretic form of the construction.  If $C$ is an admissible component and $\gamma_*$ is a base representative, admissible loops based at $\gamma_*$ form an admissible fundamental group.  The trace surface assigns to each group element an infimal swept-area length.  Thus the theory has two parallel aspects: a point-to-point cost $d_\Lambda^K$ on an admissible component and a swept-area length function on the admissible fundamental group.

\subsection*{Main results}
The first main point is a non-degeneracy theorem.  The formal path-cost argument initially gives an extended pseudometric.  We then associate to each oriented representative its integral $1$-current.  The trace of every admissible isotopy is an integral $2$-current spanning the difference of the endpoint currents, so the Euclidean-orbit flat distance is a lower bound for swept-area distance.  Since Euclidean orbits of a fixed compact embedded curve are closed in the flat topology, this lower bound separates distinct moduli classes.  Thus the swept-area cost is a genuine extended metric on $Y_\Lambda(K)$ and a finite metric on each admissible component.  We also prove monotonicity in $\Lambda$.

The second main point is that the current-theoretic lower bound has computable calibrations.  The projected-area vector gives a fixed-frame lower bound, while the difference of its norms gives a well-defined Euclidean-invariant lower bound on the quotient.  In particular, this calibration detects positive distance whenever the endpoint area-vector norms differ.

The third main point is that the theory admits explicit computations and rigidity results.  For concentric round representatives $C_1$ and $C_R$ of the unknot, we prove
\[
  d_{\Lambda}^{U}(C_1,C_R)=\pi(R^2-1),
  \qquad \Lambda\ge 2\pi R.
\]
We prove an analogous homothetic ellipse computation under explicit admissibility hypotheses, and we show that the ideal unknot is rigid: the ideal stratum of the unknot consists of a single class, so its swept-area metric is trivial.

The fourth main point is a quantitative finite-dimensional estimate.  In the labelled polygonal category, for fixed edge number, we prove that swept area dominates the Euclidean orbit distance on uniformly non-collinear strata.  Unlike the current lower bound, which deliberately forgets parametrization and vertex labels, this estimate separates labelled polygonal configurations.

Finally, we compare the swept-area geometry with static geometric quantities and with diagrammatic transition costs.  Static quantities such as density and compression radius are treated separately from proved estimates, conditional principles, and open questions.  Swept-area weighted lifted Reidemeister graphs give a diagrammatic distance bounded above by the infimal cost over diagrammatically generic isotopies.  We state separately the approximation hypothesis needed to replace this generic cost by the unrestricted geometric distance.

The paper therefore has a three-layer structure.  Static quantities assign numerical invariants to representatives.  Filtered topology records how thick representatives become connected as the ropelength bound is relaxed.  Dynamic geometry assigns an area cost to admissible paths.

Related $p$-density, compression-radius, deformation-persistence, and finite-recognition viewpoints are developed in the author's preprints \cite{OzawaUnifiedPDensities,OzawaGeometricDensities,OzawaIdealStratum,OzawaFiniteKnotTheory}.  The basic swept-area cost was already proposed in \cite{OzawaIdealStratum}; the present paper is its systematic metric development.  Its new ingredients include the quotient-safe definition, current-theoretic non-degeneracy, the based loop-length structure and its basepoint control, the calibration and exact computations, the labelled polygonal estimate, and the weighted diagrammatic comparison.  No proof here depends on an unpublished theorem from those preprints.

Minimum swept-area homotopies have also been studied as similarity measures for curves constrained to a fixed oriented surface, notably by Chambers--Wang and in subsequent computational work \cite{ChambersWang,ChangEtAl}.  The present setting is different in three respects: the curves move in $\R^3$, every intermediate curve is required to remain embedded and thick, and the deformation is restricted by a ropelength window.  The integral-current lower bound below also makes explicit the relation with geometric measure theory.

\begin{figure}[t]
\centering
\resizebox{0.95\linewidth}{!}{%
\begin{tikzpicture}[
  layer/.style={draw, rounded corners=2pt, minimum width=0.74\linewidth, minimum height=0.85cm, align=center, fill=gray!8},
  small/.style={font=\small},
  arr/.style={-{Latex[length=2mm]}, thick}
]
\node[layer] (static) {\textbf{Static geometry}\quad functions on representatives\\
  $\rho_D(\gamma)=\Len(\gamma)/D(\gamma)$,\quad $\CRad_D(\gamma)=D(\gamma)/\Thi(\gamma)$};
\node[layer, below=0.55cm of static] (filtered) {\textbf{Filtered topology}\quad admissible sublevel spaces\\
  $I(K)\subset Y_{\Lambda_1}(K)\subset Y_{\Lambda_2}(K)\subset\cdots$};
\node[layer, below=0.55cm of filtered] (dynamic) {\textbf{Dynamic geometry}\quad costs of paths\\
  $d_\Lambda^K(x_0,x_1)=\inf_\Gamma A(\Gamma)$};
\draw[arr] (static) -- (filtered);
\draw[arr] (filtered) -- (dynamic);
\node[small, align=center, below=0.25cm of dynamic] {The paper studies how static invariants on points, filtered connectivity, and dynamic swept-area cost interact.};
\end{tikzpicture}%
}
\caption{Static invariants, filtered topology, and swept-area cost.}
\label{fig:three-layer}
\end{figure}

This structure is summarized in Figure~\ref{fig:three-layer}.

The same idea also has a diagrammatic counterpart.
By taking generic projections and recording Reidemeister moves that lift to admissible thick deformations, one obtains lifted Reidemeister graphs filtered by ropelength.
The swept-area functional assigns weights to lifted transitions, thereby turning these graphs into weighted objects.
This should be compared with the ropelength-filtered lifted Reidemeister graphs and characteristic Reidemeister patterns of \cite{OzawaFiniteKnotTheory}; the definition used in this paper is given explicitly in \cref{sec:weighted-reidemeister}.  In this way, the present framework connects ropelength geometry, deformation persistence, and diagrammatic transition costs.

\subsection*{Organization of the paper}
In \cref{sec:ropelength-spaces} we recall ropelength-filtered knot spaces and admissible deformations.
In \cref{sec:swept-area,sec:metric} we define swept area, construct the associated extended metric, and prove non-degeneracy by the flat norm of integral currents.
In \cref{sec:admissible-fundamental-groups} we define admissible fundamental groups and swept-area length functions.
In \cref{sec:three-layer,sec:ideal-strata} we relate the construction to static invariants, filtered topology, and merge costs, carefully distinguishing proved estimates from conditional principles and open questions.
In \cref{sec:polygonal} we prove a quantitative finite-dimensional theorem in labelled polygonal models.
In \cref{sec:weighted-reidemeister} we introduce swept-area weighted lifted Reidemeister graphs and compare their diagrammatic distances with generic geometric swept-area cost.
In \cref{sec:examples} we prove the calibration estimates, exact computations for planar families, and rigidity of the ideal unknot.
The final section records a short list of analytic and geometric questions needed for a fuller theory.

\section{Ropelength-filtered knot spaces}
\label{sec:ropelength-spaces}

\subsection{Regularity conventions}
\label{subsec:regularity-conventions}
Throughout the paper we use the following convention.  In the smooth category, representatives are closed embedded $C^{1,1}$ curves.  Thus length, curvature almost everywhere, and thickness are defined in the standard ropelength sense.  An admissible isotopy is required to be Lipschitz as a map $S^1\times[0,1]\to\R^3$, with each time slice a $C^{1,1}$ embedded representative of the fixed knot type satisfying the stated length and thickness bounds.  This is the regularity needed for the area formula used in \cref{sec:swept-area}.

In the polygonal category, representatives are labelled embedded polygons with a fixed number of edges, and admissible polygonal isotopies are absolutely continuous paths of labelled vertices.  The polygonal category is treated separately in \cref{sec:polygonal}.  It is used in this paper as an independent finite-dimensional analogue of the swept-area construction, not as an approximation theorem for the full $C^{1,1}$ theory.  Unless explicitly stated otherwise, statements involving $C^{1,1}$ representatives are not meant to include changes of polygonal subdivision, and polygonal statements are fixed-$N$ statements.

Let $K$ be an oriented knot type in $\R^{3}$.
Throughout this paper, a parametrized representative of $K$ means a closed embedded curve
\[
  \gamma:S^{1}\longrightarrow \R^{3}
\]
of knot type $K$, taken in the $C^{1,1}$ regularity class fixed in \cref{subsec:regularity-conventions}, unless the polygonal setting is explicitly invoked.  Orientation-preserving reparametrizations represent the same oriented curve.  For an unoriented knot type one may additionally divide by orientation reversal; the metric proof below is unchanged after taking the extra two-element orbit.

The thickness of $\gamma$, denoted by $\Thi(\gamma)$, is the supremal radius of an embedded normal tube around $\gamma$; equivalently, in the usual $C^{1,1}$ setting it may be described in terms of curvature radius and doubly-critical self-distance \cite{LSDR,GonzalezMaddocks}.
Its ropelength is
\[
  \Rop(\gamma)=\frac{\Len(\gamma)}{\Thi(\gamma)}.
\]
The ropelength of the knot type is
\[
  \Rop(K)=\inf\{\Rop(\gamma)\mid \gamma\text{ represents }K\}.
\]
Since ropelength is scale invariant, it is natural to normalize thickness.

\begin{definition}[Ropelength-filtered knot space]
For $\Lambda>0$, define
\[
  \widetilde{Y}_{\Lambda}(K)=
  \{\gamma\in K\mid \Thi(\gamma)\ge 1,\ \Len(\gamma)\le \Lambda\}.
\]
Let $\Diff^{1,1}_+(S^1)$ denote the group of orientation-preserving
$C^{1,1}$ diffeomorphisms with $C^{1,1}$ inverses.  This regularity keeps
the $C^{1,1}$ curve class invariant.  The group
$\Diff^{1,1}_+(S^1)\times\Isom^+(\R^3)$ acts by
\[
  (\phi,g)\cdot\gamma=g\circ\gamma\circ\phi.
\]
The corresponding moduli space of oriented unparametrized curves is
\[
  Y_{\Lambda}(K)=
  \widetilde{Y}_{\Lambda}(K)/
  \bigl(\Diff^{1,1}_+(S^1)\times\Isom^{+}(\R^{3})\bigr).
\]
\end{definition}

The spaces $Y_{\Lambda}(K)$ form an increasing filtration:
\[
  Y_{\Lambda}(K)\subset Y_{\Lambda'}(K)
  \qquad
  \text{whenever }\Lambda\le \Lambda'.
\]
They are empty for $\Lambda<\Rop(K)$.  Ropelength minimizers exist in every tame knot type in the $C^{1,1}$ category \cite{CKS}; after scaling a minimizer to thickness one, the level $\Lambda=\Rop(K)$ is therefore nonempty and is the first nonempty level.

\begin{figure}[t]
\centering
\begin{tikzpicture}[scale=0.95, every node/.style={font=\small}]
  \draw[thick, rounded corners=18pt, fill=gray!6] (-3.4,-1.65) rectangle (3.4,1.65);
  \draw[thick, rounded corners=14pt, fill=white] (-2.35,-1.05) rectangle (2.35,1.05);
  \draw[thick, rounded corners=10pt, fill=gray!12] (-1.25,-0.45) rectangle (1.25,0.45);
  \node at (0,1.35) {$Y_{\Lambda_2}(K)$};
  \node at (0,0.78) {$Y_{\Lambda_1}(K)$};
  \node at (0,0.05) {$I(K)$};
  \node[anchor=west] at (3.65,1.35) {$\Rop(K)<\Lambda_1<\Lambda_2$};
  \fill (-0.55,-0.20) circle (1.5pt);
  \fill (0.55,-0.20) circle (1.5pt);
  \node at (-0.55,-0.62) {$\gamma_0$};
  \node at (0.55,-0.62) {$\gamma_1$};
  \draw[-{Latex[length=2mm]}, thick, decorate, decoration={snake, amplitude=.35mm, segment length=3mm}] (-0.45,-0.20) to[bend right=15] (0.45,-0.20);
  \node at (0,-1.33) {admissible path at a fixed level};
\end{tikzpicture}
\caption{Ropelength-filtered knot spaces.}
\label{fig:filtration}
\end{figure}

\begin{definition}[Ideal stratum]
The ideal stratum of $K$ is
\[
  I(K)=Y_{\Rop(K)}(K).
\]
Equivalently, it is the space of thickness-one representatives of $K$ with length equal to the ropelength of $K$, modulo orientation-preserving reparametrizations and Euclidean isometries.
\end{definition}

In practice, the ideal stratum may have more than one point or more than one component.
The geometry of these components is one motivation for the swept-area construction.

\begin{definition}[Admissible isotopy]
Let $\gamma_{0},\gamma_{1}\in \widetilde{Y}_{\Lambda}(K)$.
An isotopy
\[
  \Gamma:S^{1}\times[0,1]\longrightarrow \R^{3},
  \qquad
  \Gamma(\cdot,0)=\gamma_{0},\quad
  \Gamma(\cdot,1)=\gamma_{1},
\]
is called $\Lambda$-admissible if $\Gamma$ has the regularity specified in \cref{subsec:regularity-conventions} and, for every $t\in[0,1]$, the curve
\[
  \Gamma_{t}=\Gamma(\cdot,t)
\]
satisfies
\[
  \Thi(\Gamma_{t})\ge 1,
  \qquad
  \Len(\Gamma_{t})\le \Lambda.
\]
\end{definition}

Thus a $\Lambda$-admissible isotopy is an isotopy through representatives which remain inside the ropelength sublevel space.

\begin{definition}[Admissible component]
Two classes $x_0,x_1\in Y_{\Lambda}(K)$ are $\Lambda$-admissibly equivalent if there are representatives $\gamma_i\in x_i$ joined by a $\Lambda$-admissible isotopy.  The resulting equivalence classes are called the admissible components of $Y_{\Lambda}(K)$.
\end{definition}

This is indeed an equivalence relation.  For transitivity, two paths whose
middle endpoints are different representatives of the same moduli class are
aligned by applying the same reparametrization and Euclidean isometry to one
entire path, and are then concatenated.

\begin{lemma}[Simultaneous symmetry invariance]
\label{lem:simultaneous-invariance}
Let $g\in\Isom^+(\R^3)$, let $\phi\in\Diff^{1,1}_+(S^1)$, and let $\Gamma$ be an admissible isotopy.  Then
\[
  \widehat\Gamma(s,t)=g\bigl(\Gamma(\phi(s),t)\bigr)
\]
is admissible whenever $\Gamma$ is, and
\[
  A(\widehat\Gamma)=A(\Gamma).
\]
\end{lemma}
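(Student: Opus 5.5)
The plan is to verify two things separately: that $\widehat\Gamma$ is admissible, and that the swept area is unchanged.

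\textbf{Admissibility.} First, the boundary conditions. Since $\widehat\Gamma(\cdot,0)=g\circ\gamma_0\circ\phi$ and $\widehat\Gamma(\cdot,1)=g\circ\gamma_1\circ\phi$, the endpoints are representatives of the same moduli classes. Second, the regularity of $\widehat\Gamma$. The map $(s,t)\mapsto(\phi(s),t)$ is bi-Lipschitz, because $\phi$ is $C^{1,1}$ with $C^{1,1}$ inverse. The isometry $g$ is $1$-Lipschitz. Hence $\widehat\Gamma$ is Lipschitz on $S^1\times[0,1]$. Third, the time slices. Each slice $\widehat\Gamma_t=g\circ\Gamma_t\circ\phi$ is a $C^{1,1}$ embedding, since compositions of $C^{1,1}$ maps with $C^{1,1}$ maps remain $C^{1,1}$ and $\phi$ is a bijection. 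It has the same knot type, because $g$ is orientation preserving and $\phi$ preserves orientation. Fourth, the constraints. Length is invariant under orientation-preserving reparametrization and under isometries. Thickness is defined as the supremal radius of an embedded normal tube, which depends only on the image set and is preserved by isometries. So $\Thi(\widehat\Gamma_t)=\Thi(\Gamma_t)\ge1$ and $\Len(\widehat\Gamma_t)=\Len(\Gamma_t)\le\Lambda$.

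\textbf{Area.} Both maps are Lipschitz, so the partial derivatives exist almost everywhere. Write $g(x)=Qx+b$ with $Q\in SO(3)$. Almost everywhere we have
\[
\partial_s\widehat\Gamma(s,t)=\phi'(s)\,Q\,\partial_s\Gamma(\phi(s),t),\qquad \partial_t\widehat\Gamma(s,t)=Q\,\partial_t\Gamma(\phi(s),t).
\]
Using $Qu\times Qv=Q(u\times v)$ for $Q\in SO(3)$ and $\phi'>0$, this gives
\[
|\partial_s\widehat\Gamma\times\partial_t\widehat\Gamma|(s,t)=\phi'(s)\,|\partial_s\Gamma\times\partial_t\Gamma|(\phi(s),t).
\]
Integrating and applying the change of variables $u=\phi(s)$ then yields $A(\widehat\Gamma)=A(\Gamma)$.

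The one technical point I expect to need care is the almost-everywhere chain rule for $\Gamma\circ(\phi\times\mathrm{id})$. The Lipschitz function $\Gamma$ is differentiable off a null set $N$. Its preimage under the bi-Lipschitz map $\phi\times\mathrm{id}$ is again null, so the chain rule holds almost everywhere. The change of variables for the absolutely continuous map $\phi$ is then standard via Fubini in $t$. Alternatively, one can note that the area formula identifies $A(\Gamma)$ with the multiplicity-weighted $\mathcal H^2$ measure of the image, and this quantity is invariant under both reparametrization of the domain and isometries of the target.
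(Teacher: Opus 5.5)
Your proof is correct and follows the same route as the paper. Admissibility comes from the invariance of length and thickness under isometries and orientation-preserving reparametrization. The area equality comes from $|\partial_s\widehat\Gamma\times\partial_t\widehat\Gamma|(s,t)=\phi'(s)\,|\partial_s\Gamma\times\partial_t\Gamma|(\phi(s),t)$ followed by the change of variables in $s$; your added justification of the almost-everywhere chain rule, via the bi-Lipschitz map $\phi\times\mathrm{id}$, and of the $C^{1,1}$ regularity of the slices fills in details the paper leaves implicit.
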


\begin{proof}
Euclidean isometries preserve length, thickness, and cross-product norms.
Changing the parameter preserves every time-slice image.  Its Jacobian
factor is cancelled by change of variables in $s$.  Thus admissibility and
swept area are unchanged.
\end{proof}

\section{Swept area of knot isotopies}
\label{sec:swept-area}

We now define the area swept out by a moving knot.
Let
\[
  \Gamma:S^{1}\times[0,1]\longrightarrow \R^{3}
\]
be a Lipschitz isotopy as a map from the parameter cylinder to $\R^3$, or a $C^1$ isotopy when smooth calculations are being made.
We write $\Gamma_{t}(s)=\Gamma(s,t)$.

\begin{definition}[Swept area]
The swept area of $\Gamma$ is
\[
  A(\Gamma)=
  \int_{S^{1}\times[0,1]}
  |\partial_{s}\Gamma(s,t)\times \partial_{t}\Gamma(s,t)|\,ds\,dt.
\]
\end{definition}

Geometrically, $A(\Gamma)$ is the parametrized area of the trace surface swept by the moving curve.
The integrand is the Jacobian of the map $\Gamma$ from the parameter cylinder to $\R^{3}$.

\begin{figure}[t]
\centering
\begin{tikzpicture}[scale=0.95, every node/.style={font=\small}]
  \draw[fill=gray!12, draw=none] plot[smooth, tension=.8] coordinates {(-3,-1.2) (-1.2,-0.9) (0.6,-1.35) (3,-1.2)} --
    plot[smooth, tension=.8] coordinates {(3,1.0) (1.1,0.72) (-0.7,1.28) (-2.2,1.1)} -- cycle;
  \draw[thick] plot[smooth, tension=.8] coordinates {(-3,-1.2) (-1.2,-0.9) (0.6,-1.35) (3,-1.2)} node[right] {$\gamma_0$};
  \draw[thick] plot[smooth, tension=.8] coordinates {(-2.2,1.1) (-0.7,1.28) (1.1,0.72) (3,1.0)} node[right] {$\gamma_1$};
  \draw[-{Latex[length=2mm]}, thick] (-2.5,-0.8) -- (-2.0,0.72);
  \draw[-{Latex[length=2mm]}, thick] (-0.3,-1.05) -- (-0.05,0.83);
  \draw[-{Latex[length=2mm]}, thick] (2.1,-0.95) -- (2.5,0.68);
  \node at (0,0.05) {$A(\Gamma)=\displaystyle\int |\partial_s\Gamma\times\partial_t\Gamma|\,ds\,dt$};
  \node[anchor=west] at (3.25,-0.1) {$t$};
  \node[anchor=south] at (-2.85,1.4) {trace surface};
\end{tikzpicture}
\caption{The trace surface of a knot isotopy.}
\label{fig:swept-area}
\end{figure}

\begin{remark}[Regularity and well-definedness]
The formula is immediate for $C^1$ isotopies.  More generally, throughout the paper we allow isotopies which are Lipschitz as maps
$S^1\times[0,1]\to\R^3$ and whose time slices are embedded curves in the chosen knot class.  For such maps the weak derivatives exist almost everywhere and the area formula of Federer \cite[Section~3.2.3]{Federer} gives a well-defined parametrized area; see also Morgan \cite{Morgan} for an accessible geometric-measure-theoretic reference
\[
  A(\Gamma)=\int_{S^1\times[0,1]} J_2\Gamma\,ds\,dt,
  \qquad
  J_2\Gamma=|\partial_s\Gamma\times\partial_t\Gamma|.
\]
For $C^{1,1}$ curves this is compatible with the usual ropelength setting, and for polygonal isotopies it agrees with the area of the associated piecewise-linear trace surface counted with multiplicity.  No existence of a minimizing isotopy is assumed in the definition of $d_\Lambda^K$.
\end{remark}

\begin{remark}[Self-intersections and multiplicity]
Although each time slice of an admissible isotopy is an embedded curve, the trace map $\Gamma:S^1\times[0,1]\to\R^3$ need not be an embedding of the cylinder.  The trace surface may have self-intersections.  The quantity $A(\Gamma)$ is the parametrized area, counted with multiplicity.  This is the appropriate notion for the calibration estimates below, because for every smooth two-form $\omega$ of comass at most one we have the pointwise bound $|\Gamma^*\omega|\le J_2\Gamma$ almost everywhere.
\end{remark}

\begin{remark}[Reparametrization]
The quantity $A(\Gamma)$ is invariant even under a time-dependent orientation-preserving reparametrization of the source circle.  Let $\phi_t\in\Diff^{1,1}_+(S^1)$ be a jointly Lipschitz family, with the derivatives below understood almost everywhere, and set
\[
  \widehat\Gamma(s,t)=\Gamma(\phi_t(s),t).
\]
Then the chain rule gives
\[
  \partial_s\widehat\Gamma=(\partial_s\phi_t)\,\partial_s\Gamma,
  \qquad
  \partial_t\widehat\Gamma=(\partial_t\phi_t)\,\partial_s\Gamma+\partial_t\Gamma,
\]
where the derivatives of $\Gamma$ on the right are evaluated at $(\phi_t(s),t)$.  The tangential term in $\partial_t\widehat\Gamma$ disappears after taking the cross product, and the remaining factor $|\partial_s\phi_t|$ is cancelled by change of variables in $s$.  Hence $A(\widehat\Gamma)=A(\Gamma)$.  Likewise, a monotone $C^1$ time reparametrization fixing the endpoints does not change the area.  This explains why the functional naturally lives on unparametrized curves.  We do not use non-injective reparametrizations.
\end{remark}

\begin{lemma}[Basic properties of swept area]
Let $\Gamma$ be an isotopy from $\gamma_{0}$ to $\gamma_{1}$.
Then:
\begin{enumerate}[label=\textup{(\arabic*)}]
  \item $A(\Gamma)\ge 0$.
  \item If $\overline{\Gamma}(s,t)=\Gamma(s,1-t)$, then
  \[
    A(\overline{\Gamma})=A(\Gamma).
  \]
  \item If $\Gamma^{1}$ is an isotopy from $\gamma_{0}$ to $\gamma_{1}$ and $\Gamma^{2}$ is an isotopy from $\gamma_{1}$ to $\gamma_{2}$, then their concatenation satisfies
  \[
    A(\Gamma^{1}*\Gamma^{2})=A(\Gamma^{1})+A(\Gamma^{2}).
  \]
\end{enumerate}
\end{lemma}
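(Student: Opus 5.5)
All three items follow directly from the definition of $A(\Gamma)$ as the integral of the Jacobian $J_2\Gamma=|\partial_s\Gamma\times\partial_t\Gamma|$ over the parameter cylinder. The only point needing care is the regularity of the objects involved: each must again be a Lipschitz map $S^1\times[0,1]\to\R^3$, so that the area formula applies.

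For (1), the integrand is a norm and hence pointwise nonnegative almost everywhere. Its integral is therefore nonnegative, possibly $+\infty$ if one allows that, though for Lipschitz $\Gamma$ it is finite because the integrand is bounded.

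For (2), we will set $\overline\Gamma(s,t)=\Gamma(s,1-t)$ and note that $\overline\Gamma$ is Lipschitz with the same constant. Almost everywhere,
\[
  \partial_s\overline\Gamma(s,t)=\partial_s\Gamma(s,1-t),\qquad
  \partial_t\overline\Gamma(s,t)=-\partial_t\Gamma(s,1-t).
\]
So the cross product changes only by a sign and its norm is unchanged. The substitution $u=1-t$, which is measure-preserving on $[0,1]$, then gives $A(\overline\Gamma)=A(\Gamma)$.

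For (3), we will define the concatenation by $\Gamma^1*\Gamma^2(s,t)=\Gamma^1(s,2t)$ for $t\le 1/2$ and $\Gamma^2(s,2t-1)$ for $t\ge 1/2$. The two pieces agree at $t=1/2$ because $\Gamma^1(\cdot,1)=\gamma_1=\Gamma^2(\cdot,0)$ as parametrized curves; if the middle curves were only equal as moduli classes, we would first align them using \cref{lem:simultaneous-invariance}. The concatenation is therefore continuous, and it is Lipschitz, since a map that is Lipschitz on each of two closed pieces sharing a boundary slice is Lipschitz on the union: for points on opposite sides one goes through the common slice. The line $t=1/2$ has measure zero, so we split the integral there. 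On each half the chain rule gives $\partial_t(\Gamma^1*\Gamma^2)=2\,\partial_t\Gamma^i$ with $\partial_s$ unchanged, so the Jacobian is multiplied by $2$. The substitutions $u=2t$ and $u=2t-1$ contribute the factor $1/2$, which cancels it. This yields $A(\Gamma^1)+A(\Gamma^2)$.

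The main obstacle, such as it is, lies in (3): verifying that the concatenation is still Lipschitz, and that the chain rule and change of variables hold almost everywhere for merely Lipschitz maps. The change of variables we use is affine in $t$ alone, so the chain rule for Lipschitz maps composed with affine maps is standard and presents no real difficulty. Equivalently, one can invoke the remark that monotone time reparametrizations do not change the area.
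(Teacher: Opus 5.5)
Your proposal is correct and follows the same approach as the paper: nonnegativity of the integrand, the change of variables $t\mapsto 1-t$, and splitting the time interval at $t=1/2$ with affine reparametrization of each piece. You simply supply more of the regularity detail, namely the Lipschitz property of the concatenation and the a.e.\ chain rule, which the paper leaves implicit.
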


\begin{proof}
The first assertion follows from the non-negativity of the integrand.
The second follows by the change of variables $t\mapsto 1-t$.
The third follows by splitting the parameter interval into two subintervals and reparametrizing each isotopy linearly in time.
\end{proof}

\section{The swept-area extended metric}
\label{sec:metric}

We now define the main object of the paper.
Fix a knot type $K$ and a ropelength level $\Lambda\ge \Rop(K)$.

\begin{definition}[Exact-endpoint and quotient swept-area costs]
Let $\gamma_{0},\gamma_{1}\in \widetilde{Y}_{\Lambda}(K)$.
Define
\[
  \widetilde d_{\Lambda}^{K}(\gamma_{0},\gamma_{1})=
  \inf_{\Gamma} A(\Gamma),
\]
where the infimum is taken over all $\Lambda$-admissible isotopies $\Gamma$ from $\gamma_{0}$ to $\gamma_{1}$.
If there is no such isotopy, the value is $+\infty$.

For classes $x_i=[\gamma_i]\in Y_\Lambda(K)$, define the quotient cost by
\[
  d_\Lambda^K(x_0,x_1)
  =\inf_{\substack{g\in\Isom^+(\R^3)\\
                    \phi\in\Diff^{1,1}_+(S^1)}}
   \widetilde d_\Lambda^K
   \bigl(\gamma_0,g\circ\gamma_1\circ\phi\bigr).
\]
\end{definition}

Equivalently, $d_\Lambda^K(x_0,x_1)$ is the infimum of $A(\Gamma)$ over admissible isotopies whose initial and terminal curves represent $x_0$ and $x_1$, respectively.  The explicit endpoint-alignment infimum is essential: simultaneous invariance alone would not make an exact-endpoint cost independent of two separately chosen orbit representatives.  By \cref{lem:simultaneous-invariance}, the displayed value is independent of the representatives $\gamma_i$.

\begin{definition}[Extended pseudometric]
A function
\[
  d:X\times X\longrightarrow [0,+\infty]
\]
is called an extended pseudometric if, for all $x,y,z\in X$,
\[
  d(x,x)=0,
  \qquad
  d(x,y)=d(y,x),
\]
and
\[
  d(x,z)\le d(x,y)+d(y,z).
\]
Unlike a metric, it is not required that $d(x,y)=0$ imply $x=y$.
An extended metric is an extended pseudometric which also satisfies this
separation axiom.
\end{definition}

\begin{proposition}[Formal extended-pseudometric property]
\label{prop:pseudometric-formal}
Let $K$ be a knot type and let $\Lambda\ge \Rop(K)$.
The quotient swept-area cost $d_{\Lambda}^{K}$ is an extended pseudometric on all of $Y_{\Lambda}(K)$.  Its restriction to each admissible component is finite.
\end{proposition}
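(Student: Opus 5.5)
The plan is to verify the three axioms directly from the definition of $d_\Lambda^K$, using the basic properties of swept area and \cref{lem:simultaneous-invariance}. After that, finiteness on admissible components needs a separate argument. Throughout, write $G=\Diff^{1,1}_+(S^1)\times\Isom^+(\R^3)$ and $h\cdot\gamma=g\circ\gamma\circ\phi$ for $h=(\phi,g)\in G$.

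\textbf{Vanishing on the diagonal.} The constant isotopy $\Gamma(s,t)=\gamma_0(s)$ is Lipschitz, and every time slice equals $\gamma_0\in\widetilde Y_\Lambda(K)$, so it is $\Lambda$-admissible. Since $\partial_t\Gamma=0$, we get $A(\Gamma)=0$. Taking $h$ the identity in the infimum gives $d_\Lambda^K(x,x)=0$.

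\textbf{Symmetry.} Take an admissible $\Gamma$ from $\gamma_0$ to $h\cdot\gamma_1$. Reverse it to $\overline\Gamma$, which has the same area by part (2) of the basic lemma. Then apply $h^{-1}$ to the whole path via \cref{lem:simultaneous-invariance}. This produces an admissible isotopy of equal area from $\gamma_1$ to $h^{-1}\cdot\gamma_0$. Taking infima gives $d(x_1,x_0)\le d(x_0,x_1)$, and the reverse inequality follows by the same argument with the roles exchanged.

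\textbf{Triangle inequality.} We may assume both right-hand terms are finite, and fix $\eps>0$. Choose $\Gamma^1$ from $\gamma_0$ to $h_1\cdot\gamma_1$ and $\Gamma^2$ from $\gamma_1$ to $h_2\cdot\gamma_2$, each with area within $\eps$ of the respective distance. Applying $h_1$ to all of $\Gamma^2$ gives, by \cref{lem:simultaneous-invariance}, an admissible path of equal area from $h_1\cdot\gamma_1$ to $h_1h_2\cdot\gamma_2$; here we use that $G$ is a group acting on the left. Concatenation stays Lipschitz and admissible, since it glues at a common slice after linear time rescaling, and its area is additive by part (3). This yields $d(x_0,x_2)\le d(x_0,x_1)+d(x_1,x_2)+2\eps$.

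\textbf{Finiteness on admissible components.} This is the step I expect to be the main obstacle. If $x_0,x_1$ lie in one component, then by definition some representatives are joined by a Lipschitz admissible isotopy $\Gamma$. A Lipschitz map on the compact cylinder has bounded Jacobian, so $A(\Gamma)\le \operatorname{Lip}(\Gamma)^2\cdot|S^1|<\infty$. By the representative-independence noted after the definition of the quotient cost, this bounds $d(x_0,x_1)$.

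The subtlety is that the component relation is generated by chains of such paths with realignment at each middle endpoint, as in the transitivity remark. The clean way to handle it is to note that the triangle inequality just proved already shows that finite distance is transitive. So we only need finiteness for a single admissible path, which is the Lipschitz bound above. I would also check that reparametrizing $\Gamma$ by a $C^{1,1}$ diffeomorphism preserves the Lipschitz property, which holds because $\phi$ is Lipschitz.
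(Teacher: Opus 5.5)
Your proof is correct and follows essentially the same route as the paper: the constant isotopy gives the diagonal, time reversal combined with \cref{lem:simultaneous-invariance} gives symmetry, and for the triangle inequality you realign the second path by the first alignment and then concatenate. For finiteness, the paper simply asserts that finite distance is equivalent to lying in one admissible component; your bound of the Jacobian by $\operatorname{Lip}(\Gamma)^2$ supplies the detail it leaves implicit, and the chain subtlety does not arise because the paper already shows the single-path relation is transitive (your triangle-inequality argument would cover it anyway).
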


\begin{proof}
The constant isotopy gives $d_\Lambda^K(x,x)=0$.  Time reversal, together with simultaneous invariance under the symmetry group, gives symmetry.  If either term on the right-hand side of the triangle inequality is $+\infty$, there is nothing to prove.

For the triangle inequality, choose admissible paths from a representative $\gamma_0$ of $x_0$ to an aligned representative $g_{01}\circ\gamma_1\circ\phi_{01}$ of $x_1$, and from $\gamma_1$ to $g_{12}\circ\gamma_2\circ\phi_{12}$, with areas within $\eps$ of the two quotient infima.  Apply $g_{01}$ and precompose by $\phi_{01}$ throughout the second path.  Its initial curve then agrees with the terminal curve of the first path, its terminal curve still represents $x_2$, and its area is unchanged.  Concatenation gives
\[
  d_\Lambda^K(x_0,x_2)
  \le d_\Lambda^K(x_0,x_1)+d_\Lambda^K(x_1,x_2)+2\eps.
\]
Letting $\eps\downarrow0$ proves the triangle inequality.  Finally, finite distance is equivalent to belonging to the same admissible component.
\end{proof}

We now prove separation.  For an oriented Lipschitz closed curve $\gamma$, let
$T_\gamma=\gamma_\#[\![S^1]\!]$ be the associated integral $1$-current.
For an integral $1$-current $T$ in $\R^3$, write
\[
  \Flat(T)=\inf\{\mathbf M(R)+\mathbf M(S)
  \mid T=R+\partial S,\ R\in\mathbf I_1(\R^3),\
  S\in\mathbf I_2(\R^3)\}
\]
for its Federer--Fleming flat norm, where $\mathbf M$ denotes mass and
$\mathbf I_j(\R^3)$ denotes the space of integral $j$-currents
\cite{Federer,Morgan}.  When $\partial T=0$, also write
\[
  \Fill(T)=\inf\{\mathbf M(S)\mid
  S\in\mathbf I_2(\R^3),\ \partial S=T\}
\]
for its filling area.  Define the Euclidean-orbit versions by
\[
  \Flat_{\mathrm{orb}}(x_0,x_1)
  =\inf_{g\in\Isom^+(\R^3)}
  \Flat\bigl(T_{\gamma_0}-g_\#T_{\gamma_1}\bigr),
\]
and
\[
  \Fill_{\mathrm{orb}}(x_0,x_1)
  =\inf_{g\in\Isom^+(\R^3)}
  \Fill\bigl(T_{\gamma_0}-g_\#T_{\gamma_1}\bigr),
  \qquad x_i=[\gamma_i].
\]
Orientation-preserving reparametrization does not change $T_\gamma$, and
both mass functionals are invariant under Euclidean isometries.  Thus the
two orbit quantities are well defined on $Y_\Lambda(K)$.

\begin{lemma}[Filling and flat-current lower bounds]
\label{lem:flat-current-lower-bound}
For $\gamma_0,\gamma_1\in\widetilde Y_\Lambda(K)$,
\[
  \widetilde d_\Lambda^K(\gamma_0,\gamma_1)
  \ge \Fill(T_{\gamma_1}-T_{\gamma_0})
  \ge \Flat(T_{\gamma_1}-T_{\gamma_0}).
\]
Consequently, for $x_0,x_1\in Y_\Lambda(K)$,
\[
  d_\Lambda^K(x_0,x_1)
  \ge\Fill_{\mathrm{orb}}(x_0,x_1)
  \ge
  \Flat_{\mathrm{orb}}(x_0,x_1).
\]
\end{lemma}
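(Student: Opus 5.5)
The plan is to push forward the parameter cylinder by an admissible isotopy and read off a filling of $T_{\gamma_1}-T_{\gamma_0}$ whose mass is at most the swept area. Fix a $\Lambda$-admissible isotopy $\Gamma:S^1\times[0,1]\to\R^3$ from $\gamma_0$ to $\gamma_1$; if none exists, the left side is $+\infty$ and there is nothing to prove.

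\textbf{Step 1 (the trace current).} Orient the cylinder $S^1\times[0,1]$ by the product orientation and set $S_\Gamma=\Gamma_\#[\![S^1\times[0,1]]\!]$. Since $\Gamma$ is Lipschitz on a compact domain, $S_\Gamma$ is an integral $2$-current; indeed the Lipschitz push-forward of an integer rectifiable current is integer rectifiable, and boundaries are handled the same way.

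\textbf{Step 2 (boundary of the trace).} Boundary commutes with push-forward. The boundary of the oriented cylinder is $[\![S^1\times\{1\}]\!]-[\![S^1\times\{0\}]\!]$ with the product orientation convention; if the convention gives the opposite sign, replace $S_\Gamma$ by $-S_\Gamma$, which has the same mass. Hence
\[
  \partial S_\Gamma=(\Gamma_1)_\#[\![S^1]\!]-(\Gamma_0)_\#[\![S^1]\!]=T_{\gamma_1}-T_{\gamma_0}.
\]
So $S_\Gamma$ is admissible in the definition of $\Fill(T_{\gamma_1}-T_{\gamma_0})$. Note that $T_{\gamma_1}-T_{\gamma_0}$ is a cycle, so $\Fill$ is defined.

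\textbf{Step 3 (mass versus area).} By the area formula of Federer,
\[
  \mathbf M(S_\Gamma)\le\int_{S^1\times[0,1]}J_2\Gamma\,ds\,dt=A(\Gamma).
\]
Inequality can occur because sheets with opposite orientation may cancel and multiplicity is counted. Equivalently, for any $2$-form $\omega$ of comass at most one,
\[
  S_\Gamma(\omega)=\int\Gamma^*\omega\le\int J_2\Gamma,
\]
using the pointwise bound $|\Gamma^*\omega|\le J_2\Gamma$ from the remark on multiplicity. Taking the supremum over $\omega$ gives the mass bound. Hence $\Fill(T_{\gamma_1}-T_{\gamma_0})\le A(\Gamma)$, and taking the infimum over $\Gamma$ gives the first inequality. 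The second inequality follows by choosing $R=0$ in the definition of $\Flat$: every filling $S$ is a competitor with cost $\mathbf M(S)$.

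\textbf{Step 4 (quotient version).} For the orbit statement, fix $g,\phi$. We have $T_{g\circ\gamma_1\circ\phi}=g_\#T_{\gamma_1}$, since orientation-preserving reparametrization does not change the current. Applying the exact-endpoint bound gives
\[
  \widetilde d(\gamma_0,g\circ\gamma_1\circ\phi)\ge\Fill\bigl(g_\#T_{\gamma_1}-T_{\gamma_0}\bigr)=\Fill\bigl(T_{\gamma_0}-g_\#T_{\gamma_1}\bigr)\ge\Fill_{\mathrm{orb}}(x_0,x_1).
\]
The middle equality holds because mass is invariant under $S\mapsto -S$. Taking the infimum over $(g,\phi)$ yields $d_\Lambda^K\ge\Fill_{\mathrm{orb}}$. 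Then $\Fill_{\mathrm{orb}}\ge\Flat_{\mathrm{orb}}$ follows termwise from Step 3.

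The main obstacle is the measure-theoretic justification in Steps 1–3: that a Lipschitz push-forward is integral, that $\partial$ commutes with $\Gamma_\#$, and the mass bound $\mathbf M(\Gamma_\#[\![M]\!])\le\int J_2\Gamma$. I would cite these standard facts from Federer rather than reprove them, and keep track of orientation signs.
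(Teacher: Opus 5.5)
Your proposal is correct and follows the paper's proof essentially step for step. You push forward the parameter cylinder to obtain an integral filling $S_\Gamma$ of $T_{\gamma_1}-T_{\gamma_0}$, bound its mass by $A(\Gamma)$ via the area formula, take $R=0$ in the flat norm, and then optimize over isotopies and endpoint alignments; your Step~4 also usefully spells out two points the paper leaves implicit, namely $T_{g\circ\gamma_1\circ\phi}=g_\#T_{\gamma_1}$ and the sign symmetry $\Fill(-T)=\Fill(T)$ needed to match the orientation convention in $\Fill_{\mathrm{orb}}$.
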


\begin{proof}
Let $\Gamma$ be an admissible isotopy with the indicated exact endpoints.
The push-forward
\[
  S_\Gamma=\Gamma_\#[\![S^1\times[0,1]]\!]
\]
is an integral $2$-current and
\[
  \partial S_\Gamma=T_{\gamma_1}-T_{\gamma_0}.
\]
The area formula gives
\[
  \mathbf M(S_\Gamma)\le
  \int_{S^1\times[0,1]}J_2\Gamma=A(\Gamma);
\]
The pushforward mass can be smaller because of cancellation.  In contrast,
$A(\Gamma)$ counts parametrized multiplicity.  Taking the infimum over all
fillings gives $\Fill(T_{\gamma_1}-T_{\gamma_0})\le A(\Gamma)$, while the
choice $R=0$ in the flat-norm definition gives $\Flat\le\Fill$.  Finally,
take the infimum over isotopies and then optimize the endpoint alignment.
\end{proof}

\begin{lemma}[Separation of Euclidean orbits]
\label{lem:flat-orbit-separation}
For two oriented embedded Lipschitz closed curves $\gamma_0,\gamma_1$,
\[
  \inf_{g\in\Isom^+(\R^3)}
  \Flat(T_{\gamma_0}-g_\#T_{\gamma_1})=0
\]
if and only if the curves have the same oriented image up to an
orientation-preserving Euclidean isometry.
\end{lemma}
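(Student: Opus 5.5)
One direction is immediate.  If $g_0\circ\gamma_1$ and $\gamma_0$ have the same oriented image for some $g_0\in\Isom^+(\R^3)$, then $g_{0\#}T_{\gamma_1}=T_{\gamma_0}$.  This holds because an oriented embedded Lipschitz closed curve determines its current, and the current depends only on the oriented image: two injective orientation-preserving Lipschitz parametrizations of the same Jordan curve differ by an orientation-preserving homeomorphism of $S^1$, and the pushforward of $[\![S^1]\!]$ is just integration over the image with the induced orientation and multiplicity one.  Thus the infimum is attained with value $0$.

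For the converse, suppose $\Flat(T_{\gamma_0}-g_{k\#}T_{\gamma_1})\to0$ for some sequence $g_k\in\Isom^+(\R^3)$.  Write $g_k(x)=Q_kx+b_k$.  The plan is to extract a convergent subsequence of isometries, pass to the limit in flat norm, and read off equality of images.

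\textbf{Step 1 (boundedness of translations).}  I would show that $b_k$ stays bounded.  Suppose instead that $|b_k|\to\infty$ along a subsequence.  Then the supports of $T_{\gamma_0}$ and $g_{k\#}T_{\gamma_1}$ are eventually separated by a distance $D_k\to\infty$.  Pick a smooth compactly supported $1$-form $\omega$ with $\|\omega\|,\|d\omega\|\le 1$ and $T_{\gamma_0}(\omega)\neq0$; for instance, take $\omega=\chi\,\alpha$ with $\alpha$ a $1$-form satisfying $\int_{\gamma_0}\alpha>0$ and $\chi$ a cutoff equal to $1$ near $\gamma_0$, scaled down so the norms are at most $1$.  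Once $g_k(\gamma_1)$ leaves the support of $\omega$, we get
\[
  \Flat(T_{\gamma_0}-g_{k\#}T_{\gamma_1})\ \ge\ |T_{\gamma_0}(\omega)|>0,
\]
which contradicts convergence to $0$.  The existence of such an $\alpha$ is the one point needing care.  A closed curve can have $\int_{\gamma_0}\alpha=0$ for every closed form, so $\alpha$ must be non-closed; a form like $x\,dy$ works when some projected area is nonzero.  In general I would argue directly that $T_{\gamma_0}\neq0$ as a current, since it has positive mass, and any nonzero normal current has a test form with $\|\omega\|,\|d\omega\|\le1$ giving a nonzero value.  This is the flat-norm duality $\Flat(T)=\sup\{T(\omega):\|\omega\|,\|d\omega\|\le1\}$, which I will use throughout.

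\textbf{Step 2 (limit and identification).}  With $Q_k$ in the compact group $SO(3)$ and $b_k$ bounded, pass to a subsequence with $g_k\to g$.  Then $g_{k\#}T_{\gamma_1}\to g_\#T_{\gamma_1}$ in flat norm.  Indeed, the straight-line homotopy between $g_k$ and $g$ restricted to $\gamma_1$ gives, by the homotopy formula, a flat-norm bound of order
\[
  \|g_k-g\|_{C^0(\gamma_1)}\bigl(\mathbf M(T_{\gamma_1})+\mathbf M(\partial T_{\gamma_1})\bigr),
\]
and $\partial T_{\gamma_1}=0$.  By the triangle inequality, $\Flat(T_{\gamma_0}-g_\#T_{\gamma_1})=0$, so $T_{\gamma_0}=g_\#T_{\gamma_1}$.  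Equal currents have equal supports, so the images coincide.  Equal currents also induce the same orientation on this image, since a multiplicity-one current determines its orienting tangent vector almost everywhere.  Hence the curves agree as oriented images up to $g$.

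I expect Step 1 to be the main obstacle, as it is the only place where noncompactness of $\Isom^+(\R^3)$ enters.  The cleanest route is the duality with a single fixed test form supported near $\gamma_0$: testing against forms supported near $\gamma_0$ eventually sees only $T_{\gamma_0}$, once $g_k(\gamma_1)$ has moved out of that neighborhood.
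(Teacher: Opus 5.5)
Your proposal is correct and follows the same route as the paper's proof. The steps match: compactness of $SO(3)$; boundedness of the translations because the flat limit $T_{\gamma_0}$ is a nonzero compactly supported current; flat convergence $g_{k\#}T_{\gamma_1}\to g_\#T_{\gamma_1}$ along a convergent subsequence; and identification of oriented images from equal multiplicity-one currents. Your test-form and homotopy-formula arguments simply spell out what the paper states briefly.

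One small point: the sup formula you quote is the duality for the real flat norm, which is at most the integral flat norm $\Flat$ used in the paper, so it is not an equality in general. You only ever need the inequality $|T(\omega)|\le\Flat(T)$ for $\|\omega\|,\|d\omega\|\le1$, and this follows directly from $T(\omega)=R(\omega)+S(d\omega)$ whenever $T=R+\partial S$.
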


\begin{proof}
The reverse implication is immediate.  Conversely, choose
$g_j(x)=R_jx+a_j$ such that
$g_{j\#}T_{\gamma_1}\to T_{\gamma_0}$ in flat norm.  After passing to a
subsequence, $R_j\to R$ because $SO(3)$ is compact.  The translations
$a_j$ are bounded: otherwise the compact supports of
$g_{j\#}T_{\gamma_1}$ escape every compact set, contradicting weak
convergence to the nonzero compactly supported current $T_{\gamma_0}$.
Passing to another subsequence gives $a_j\to a$ and hence
$g_{j\#}T_{\gamma_1}\to g_\#T_{\gamma_1}$, where $g(x)=Rx+a$.
Uniqueness of the flat limit gives
$g_\#T_{\gamma_1}=T_{\gamma_0}$.  Since both currents are carried with
multiplicity one by embedded oriented curves, their oriented images agree.
\end{proof}

\begin{theorem}[Swept-area metric]
\label{thm:swept-area-metric}
For every $\Lambda\ge\Rop(K)$, the swept-area cost $d_\Lambda^K$ is a
genuine extended metric on $Y_\Lambda(K)$.  Its restriction to every
admissible component is a finite metric.
\end{theorem}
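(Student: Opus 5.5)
The plan is to combine the three preceding results.

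By \cref{prop:pseudometric-formal}, $d_\Lambda^K$ is already an extended pseudometric on $Y_\Lambda(K)$. Its restriction to each admissible component takes finite values. So the only remaining axiom is separation: if $d_\Lambda^K(x_0,x_1)=0$, then $x_0=x_1$. Once separation holds on all of $Y_\Lambda(K)$, it holds on each component. The restriction to a component is then a finite-valued metric, which gives the second sentence.

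For separation, suppose $d_\Lambda^K(x_0,x_1)=0$ and choose representatives $\gamma_i\in x_i$. By \cref{lem:flat-current-lower-bound},
\[
  0=d_\Lambda^K(x_0,x_1)\ge \Fill_{\mathrm{orb}}(x_0,x_1)\ge \Flat_{\mathrm{orb}}(x_0,x_1)\ge 0,
\]
so $\inf_{g}\Flat(T_{\gamma_0}-g_\#T_{\gamma_1})=0$. The lemma is stated with $T_{\gamma_1}-T_{\gamma_0}$ rather than $T_{\gamma_0}-T_{\gamma_1}$. This is harmless, since the flat norm is invariant under $T\mapsto -T$.

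The $C^{1,1}$ representatives are in particular oriented embedded Lipschitz closed curves. Hence \cref{lem:flat-orbit-separation} applies and yields $g\in\Isom^+(\R^3)$ with $g\circ\gamma_1$ and $\gamma_0$ having the same oriented image.

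The one step needing care, and the expected main obstacle, is converting ``same oriented image'' into equality of classes in the quotient by $\Diff^{1,1}_+(S^1)\times\Isom^+(\R^3)$. I must produce $\phi\in\Diff^{1,1}_+(S^1)$ with $g\circ\gamma_1\circ\phi=\gamma_0$. I would do this in three steps.

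\begin{enumerate}[label=\textup{(\arabic*)}]
  \item Both curves are embedded and regular, since $\Thi\ge1$ forces nonvanishing speed and a $C^{1,1}$ arclength parametrization. So $\phi=(g\circ\gamma_1)^{-1}\circ\gamma_0$ is a well-defined orientation-preserving homeomorphism of $S^1$.
  \item Writing each curve via its arclength reparametrization, $\phi$ becomes a composition of arclength maps. These maps are $C^{1,1}$ with $C^{1,1}$ inverses, because the speeds are Lipschitz and bounded below.
  \item The two arclength parametrizations of the common image differ by a rotation of the circle, since the lengths agree, the images being isometric. Hence $\phi\in\Diff^{1,1}_+(S^1)$.
\end{enumerate}

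Thus $x_0=[\gamma_0]=[g\circ\gamma_1\circ\phi]=x_1$, proving separation. Combined with \cref{prop:pseudometric-formal}, this gives both assertions of the theorem.
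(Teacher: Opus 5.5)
Your proof is correct and follows essentially the paper's argument: \cref{prop:pseudometric-formal} reduces the statement to separation, \cref{lem:flat-current-lower-bound} turns zero distance into zero flat-orbit distance, and \cref{lem:flat-orbit-separation} finishes; your steps (1)--(3) fill in the passage from ``same oriented image'' to a $\Diff^{1,1}_+(S^1)$ reparametrization, which the paper only asserts. One small correction: thickness is a property of the image and does not by itself force a $C^{1,1}$ parametrization to have nonvanishing speed, so the regularity in step (1) must come from the standing convention that representatives are regular curves (otherwise a reparametrization $\gamma_0\circ\psi$ with $\psi'$ vanishing somewhere would be a class different from $[\gamma_0]$ at swept-area distance zero).
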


\begin{proof}
Only separation remains after \cref{prop:pseudometric-formal}.  If
$d_\Lambda^K(x_0,x_1)=0$, then
$\Flat_{\mathrm{orb}}(x_0,x_1)=0$ by
\cref{lem:flat-current-lower-bound}.  By
\cref{lem:flat-orbit-separation}, the endpoint curves differ only by an
allowed reparametrization and Euclidean isometry, so $x_0=x_1$.
\end{proof}

\begin{remark}[Unoriented curves]
For unoriented knot types, replace $\Diff^{1,1}_+(S^1)$ by
$\Diff^{1,1}(S^1)$ and
replace $\Flat_{\mathrm{orb}}$ by the infimum of
$\Flat(T_{\gamma_0}-\epsilon g_\#T_{\gamma_1})$ over
$g\in\Isom^+(\R^3)$ and $\epsilon\in\{\pm1\}$.  The same proof gives an
extended metric on the unoriented moduli space.
\end{remark}

\begin{proposition}[Monotonicity in the ropelength level]
\label{prop:d-lambda-monotone}
Let $\Lambda\le \Lambda'$ and suppose that $x_0,x_1\in Y_\Lambda(K)$.  Then
\[
  d_{\Lambda'}^K(x_0,x_1)\le d_{\Lambda}^K(x_0,x_1).
\]
In particular, for a fixed pair of moduli classes, the function
$\Lambda\mapsto d_\Lambda^K(x_0,x_1)$ is nonincreasing on the set of levels at which both classes occur.
\end{proposition}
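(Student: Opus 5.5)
The argument is an inclusion of admissible classes followed by taking infima. First I will check that the left-hand side is defined. Since $\Lambda\le\Lambda'$, we have $\widetilde Y_\Lambda(K)\subset\widetilde Y_{\Lambda'}(K)$, and the group $\Diff^{1,1}_+(S^1)\times\Isom^+(\R^3)$ acts compatibly on both sets. Hence $Y_\Lambda(K)\subset Y_{\Lambda'}(K)$, and $x_0,x_1$ are classes at level $\Lambda'$ with the same representatives.

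The key step compares the exact-endpoint costs. Fix $\gamma_0,\gamma_1\in\widetilde Y_\Lambda(K)$. Every $\Lambda$-admissible isotopy $\Gamma$ from $\gamma_0$ to $\gamma_1$ is also $\Lambda'$-admissible. The regularity requirement does not depend on the level, and each slice satisfies $\Thi(\Gamma_t)\ge1$ and $\Len(\Gamma_t)\le\Lambda\le\Lambda'$. The swept area $A(\Gamma)$ does not depend on the level either. So the infimum defining $\widetilde d_{\Lambda'}^K(\gamma_0,\gamma_1)$ is taken over a superset of the isotopies used for $\widetilde d_\Lambda^K(\gamma_0,\gamma_1)$, which gives
\[
  \widetilde d_{\Lambda'}^K(\gamma_0,\gamma_1)\le \widetilde d_\Lambda^K(\gamma_0,\gamma_1).
\]
This also holds when the right side is $+\infty$.

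Next I pass to the quotient. For each $g\in\Isom^+(\R^3)$ and $\phi\in\Diff^{1,1}_+(S^1)$, the curve $g\circ\gamma_1\circ\phi$ still lies in $\widetilde Y_\Lambda(K)$, so the inequality above applies to the pair $(\gamma_0,g\circ\gamma_1\circ\phi)$. Taking the infimum over $(g,\phi)$ on both sides gives $d_{\Lambda'}^K(x_0,x_1)\le d_\Lambda^K(x_0,x_1)$. By \cref{lem:simultaneous-invariance} and the remark after the definition of the quotient cost, this value does not depend on which representatives are chosen.

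The ``in particular'' clause follows by applying the inequality to any two levels $\Lambda\le\Lambda'$ at which both classes occur. I do not expect a real obstacle. The one point to check carefully is that admissibility at level $\Lambda'$ adds only the weaker length bound and nothing else. This holds because the definition of a $\Lambda$-admissible isotopy refers to $\Lambda$ only through $\Len(\Gamma_t)\le\Lambda$.
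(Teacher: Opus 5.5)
Your proof is correct and follows the paper's argument: every $\Lambda$-admissible isotopy is also $\Lambda'$-admissible, so the infimum at level $\Lambda'$ runs over a larger class. Your extra steps, first comparing the exact-endpoint costs $\widetilde d$ and then taking the endpoint-alignment infimum, only make explicit what the paper's two-line proof leaves implicit.
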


\begin{proof}
Every $\Lambda$-admissible isotopy is also $\Lambda'$-admissible when $\Lambda\le\Lambda'$.  Hence the infimum defining $d_{\Lambda'}^K$ is taken over a larger class of isotopies than the infimum defining $d_{\Lambda}^K$.

\end{proof}

\section{Admissible fundamental groups and swept-area length}
\label{sec:admissible-fundamental-groups}

The point-to-point metric is only one part of the structure carried by an admissible component.  Since an admissible isotopy has a trace surface, closed admissible isotopies also define loop-cost invariants.  To avoid ambiguity caused by the symmetry quotient, the based construction in this section is made in the parametrized, unquotiented space.  This construction is different from the classical knot concordance group: it depends on the level $\Lambda$ and on the thickness constraint, and it is attached to a fixed admissible component rather than to the knot type alone.

Let $\widetilde C$ be a $\Lambda$-admissible path component of $\widetilde Y_\Lambda(K)$, and let $\gamma_*\in \widetilde C$.  Its image lies in one admissible component of $Y_\Lambda(K)$.  Changing the actual base representative changes the based group by the usual path-conjugation isomorphism.  The associated length functions need not be exactly isometric; a quantitative additive bound is proved below.

\begin{definition}[Based admissible loop]
Let $\widetilde C$ be a $\Lambda$-admissible path component of $\widetilde Y_\Lambda(K)$, and let $\gamma_*\in\widetilde C$.  A \emph{$\Lambda$-admissible loop based at $\gamma_*$} is a $\Lambda$-admissible isotopy, in the regularity class fixed in \cref{subsec:regularity-conventions},
\[
  \Gamma:S^1\times[0,1]\longrightarrow \R^3
\]
such that
\[
  \Gamma(\cdot,0)=\Gamma(\cdot,1)=\gamma_* .
\]
Thus the base curve is fixed as an actual representative in $\widetilde Y_\Lambda(K)$, not merely up to a Euclidean motion.
\end{definition}

\begin{definition}[Based admissible homotopy]
Two based $\Lambda$-admissible loops $\Gamma_0$ and $\Gamma_1$ are \emph{based-admissibly homotopic} if there is a map
\[
  H:S^1\times[0,1]\times[0,1]\longrightarrow \R^3
\]
in the corresponding loop-homotopy regularity class such that $H(\cdot,\cdot,0)=\Gamma_0$, $H(\cdot,\cdot,1)=\Gamma_1$, and, for each $u\in[0,1]$, the map
\[
  H_u(s,t)=H(s,t,u)
\]
is a $\Lambda$-admissible loop based at $\gamma_*$.  In particular,
\[
  H(\cdot,0,u)=H(\cdot,1,u)=\gamma_*
  \qquad\text{for all }u\in[0,1].
\]
Equivalently, the homotopy is a path in the space of based $\Lambda$-admissible loops.  We do not claim that arbitrary free homotopies can be converted into based ones under the thickness and length constraints.
\end{definition}

\begin{remark}
The notation $\pi_1^\Lambda$ records the analogy with a fundamental group, but the definition is deliberately tied to the stated admissible regularity class.  No identification with the ordinary fundamental group of a particular completion or Banach topology is needed in this paper.
\end{remark}

\begin{lemma}[Based admissible homotopy is an equivalence relation]
\label{lem:based-homotopy-equivalence}
Based-admissible homotopy defines an equivalence relation on the set of based $\Lambda$-admissible loops at $\gamma_*$.
\end{lemma}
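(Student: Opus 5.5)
The plan is to check reflexivity, symmetry and transitivity directly from the definition. The guiding observation is that a based-admissible homotopy is a path in the space of based $\Lambda$-admissible loops at $\gamma_*$, parametrized by the third variable $u$. Every operation we apply will act only on $u$. It will leave the slice maps $H_u$ unchanged as maps $S^1\times[0,1]\to\R^3$, so admissibility, the length and thickness bounds, and the basepoint condition $H(\cdot,0,u)=H(\cdot,1,u)=\gamma_*$ hold automatically for each slice.

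\emph{Reflexivity.} For a based loop $\Gamma$, take $H(s,t,u)=\Gamma(s,t)$. Each slice equals $\Gamma$, hence is a based $\Lambda$-admissible loop. Since $H$ is constant in $u$, it lies in the loop-homotopy regularity class whenever $\Gamma$ lies in the loop regularity class; for Lipschitz maps this is clear. \emph{Symmetry.} Given $H$ from $\Gamma_0$ to $\Gamma_1$, set $\overline H(s,t,u)=H(s,t,1-u)$. The slices $\overline H_u=H_{1-u}$ are exactly the slices of $H$. The affine change of variable $u\mapsto 1-u$ preserves Lipschitz regularity, and it preserves any reasonable regularity class of the kind fixed in \cref{subsec:regularity-conventions}.

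\emph{Transitivity.} Given $H$ from $\Gamma_0$ to $\Gamma_1$ and $H'$ from $\Gamma_1$ to $\Gamma_2$, define $H''(s,t,u)=H(s,t,2u)$ for $u\le 1/2$ and $H''(s,t,u)=H'(s,t,2u-1)$ for $u\ge 1/2$. At $u=1/2$ both formulas give $\Gamma_1$, so $H''$ is well defined. Each slice of $H''$ is a slice of $H$ or of $H'$, hence a based $\Lambda$-admissible loop at $\gamma_*$.

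The only point needing care is that the concatenation stays in the loop-homotopy regularity class. For the Lipschitz class this follows because each piece is Lipschitz on its closed half with constant at most twice the original, and the two pieces agree on the common face $u=1/2$. A map that is Lipschitz on two closed convex pieces and continuous across their interface is globally Lipschitz: one applies the triangle inequality through a point on the interface segment. This gluing is the main (mild) obstacle. If the regularity class were instead stated as continuity of $u\mapsto H_u$ into a Banach space of loops, the same argument works, since concatenating continuous paths that agree at the junction is continuous. I would state the proof for the Lipschitz convention and remark that it uses only closure of the class under affine reparametrization in $u$ and under gluing along $u=1/2$.
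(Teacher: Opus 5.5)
Your proposal is correct and follows the paper's proof: the constant homotopy in $u$, the reversal $u\mapsto 1-u$, and concatenation in $u$. In each case every slice is a slice of one of the given homotopies, and hence a based $\Lambda$-admissible loop. Your extra check that the glued homotopy stays in the Lipschitz class is something the paper leaves implicit; your ``convex pieces'' step is fine once the $S^1$ factor is read in its universal cover or with the arc-length metric.
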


\begin{proof}
Reflexivity is given by the constant homotopy in the homotopy parameter $u$.  Symmetry is obtained by replacing $u$ with $1-u$.  For transitivity, suppose $H^0$ is a based admissible homotopy from $\Gamma_0$ to $\Gamma_1$ and $H^1$ is one from $\Gamma_1$ to $\Gamma_2$.  Concatenate them in the $u$-variable.  Each intermediate value of $u$ is one of the loops appearing in $H^0$ or $H^1$, hence is still a based $\Lambda$-admissible loop.  The base curve is fixed throughout, so the resulting homotopy is again based.
\end{proof}

\begin{definition}[Admissible fundamental group]
The set of based-admissible homotopy classes of based $\Lambda$-admissible loops is denoted by
\[
  \pi_1^\Lambda(\widetilde C,\gamma_*).
\]
The product is given by concatenation in the time variable,
\[
  [\Gamma_1]\,[\Gamma_2]=[\Gamma_1*\Gamma_2],
\]
and the inverse is represented by time reversal,
\[
  \overline\Gamma(s,t)=\Gamma(s,1-t).
\]
\end{definition}

\begin{proposition}
\label{prop:admissible-fundamental-group}
The set $\pi_1^\Lambda(\widetilde C,\gamma_*)$ is a group under concatenation.
\end{proposition}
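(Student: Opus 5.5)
The plan is to copy the classical proof that $\pi_1$ is a group. The only extra work is to check that every homotopy we build passes through based $\Lambda$-admissible loops and stays in the Lipschitz regularity class fixed in \cref{subsec:regularity-conventions}. Throughout, we use only the standard operations on the time variable $t$: linear rescaling, monotone piecewise-linear reparametrization fixing endpoints, and restriction to subintervals. These operations never change the set of time slices $\{\Gamma_t\}$. So each intermediate curve is still one of the original admissible curves, with thickness at least one and length at most $\Lambda$, and the base condition at $t=0,1$ is preserved. Lipschitz regularity is preserved because we precompose with a Lipschitz map of $[0,1]$, or of $[0,1]^2$ when the homotopy parameter is included.

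\textbf{Well-definedness of the product.} The concatenation $\Gamma_1*\Gamma_2$ runs $\Gamma_1$ on $[0,\tfrac12]$ and $\Gamma_2$ on $[\tfrac12,1]$. It is continuous because both loops equal $\gamma_*$ at the junction, and it is Lipschitz because it is glued from Lipschitz pieces along a line. Hence it is a based admissible loop. If $H^1$ and $H^2$ are based admissible homotopies from $\Gamma_1$ to $\Gamma_1'$ and from $\Gamma_2$ to $\Gamma_2'$, we concatenate them in $t$ for each fixed $u$. The base condition $H^i(\cdot,0,u)=H^i(\cdot,1,u)=\gamma_*$ for all $u$ makes $H^1_u*H^2_u$ a based admissible loop for each $u$, so it gives a homotopy from $\Gamma_1*\Gamma_2$ to $\Gamma_1'*\Gamma_2'$.

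\textbf{Group axioms.} Each axiom comes from a reparametrization homotopy.
\begin{enumerate}[label=\textup{(\roman*)}]
  \item \emph{Associativity.} The loops $(\Gamma_1*\Gamma_2)*\Gamma_3$ and $\Gamma_1*(\Gamma_2*\Gamma_3)$ differ by a piecewise-linear monotone map $\sigma:[0,1]\to[0,1]$ fixing $0$ and $1$. Set $\sigma_u=(1-u)\,\mathrm{id}+u\,\sigma$ and $H(s,t,u)=\Gamma(s,\sigma_u(t))$. Each $\sigma_u$ is monotone and fixes the endpoints, so each $H_u$ is a based admissible loop.
  \item \emph{Identity.} The identity is the constant loop $c(s,t)=\gamma_*(s)$, which is admissible since $\gamma_*\in\widetilde C\subset\widetilde Y_\Lambda(K)$. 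The loop $c*\Gamma$ equals $\Gamma\circ\sigma$, where $\sigma$ is $0$ on $[0,\tfrac12]$ and linear afterwards. This $\sigma$ is monotone but not injective, and that is harmless here because we only need Lipschitz time maps. The same linear homotopy $\sigma_u$ as in (i) then works.
  \item \emph{Inverses.} For $\Gamma*\overline\Gamma$, use the retraction homotopy $H(s,t,u)=\Gamma(s,\tau_u(t))$, where $\tau_u(t)=(1-u)\min(2t,2-2t)$. Every slice is some $\Gamma_{t'}$, hence admissible. At $t=0,1$ we get $\Gamma(\cdot,0)=\gamma_*$, and at $u=1$ the loop is constant at $\gamma_*$.
\end{enumerate}

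I expect the main obstacle to be regularity bookkeeping rather than topology. The loop-homotopy regularity class is only loosely specified, so I will assume it is "Lipschitz on $S^1\times[0,1]^2$ with each $H_u$ a based admissible loop". Under that assumption, composition with a Lipschitz map $(t,u)\mapsto\tau(t,u)$ and gluing along the lines $t=\tfrac12$ or $u=\tfrac12$ stay within the class. This is the point to state explicitly. Transitivity of the relation is already given by \cref{lem:based-homotopy-equivalence}. The key structural fact is that time reparametrizations never create new curves, so the thickness and length constraints never need to be rechecked.
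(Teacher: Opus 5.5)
Your proposal is correct and follows the paper's proof essentially verbatim: every homotopy is a Lipschitz time reparametrization, so every time slice is a slice of one of the original admissible loops, and your retraction $\tau_u(t)=(1-u)\min(2t,2-2t)$ is exactly the paper's $r_u$ for the inverse law. You also check explicitly that concatenation descends to based-admissible homotopy classes and write out the identity-law homotopy, which fills in details the paper leaves implicit.
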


\begin{proof}
By Lemma~\ref{lem:based-homotopy-equivalence}, the quotient by based admissible homotopy is well defined.  Concatenation and time reversal preserve $\Lambda$-admissibility because their time slices are time slices of the original admissible loops.  The identity is the constant loop at $\gamma_*$, which is $\Lambda$-admissible since $\gamma_*\in\widetilde Y_\Lambda(K)$.

Associativity is represented by the usual homotopy of the time subdivision of $[0,1]$.  This homotopy changes only the speed with which the three loops are traversed and inserts or removes constant subintervals.  Thus every intermediate loop has the same set of time-slice curves as the original loops, possibly with repeated constant intervals, and remains based and $\Lambda$-admissible.

For the inverse law, let $r(t)$ be the triangular function $r(t)=2t$ for $0\le t\le1/2$ and $r(t)=2-2t$ for $1/2\le t\le1$.  The loop $\Gamma*\overline\Gamma$ is the path $t\mapsto\Gamma(\cdot,r(t))$.  Define $r_u(t)=(1-u)r(t)$ and
\[
  H(s,t,u)=\Gamma(s,r_u(t)).
\]
For each $u$, the time slices of $H_u$ are time slices of $\Gamma$, and $H(\cdot,0,u)=H(\cdot,1,u)=\gamma_*$.  Hence $H_u$ is a based $\Lambda$-admissible loop.  At $u=0$ it is $\Gamma*\overline\Gamma$, while at $u=1$ it is the constant loop at $\gamma_*$.  This proves the inverse law.
\end{proof}

\begin{remark}[Free admissible loops]
If one does not want to choose a base representative, one may instead consider closed $\Lambda$-admissible isotopies modulo free admissible homotopy.  This gives the set of free admissible loop classes, corresponding to conjugacy classes in the based group when the usual basepoint-change operations are admissible.  The free version is often the more natural object on the quotient $Y_\Lambda(K)$.  It is not, in general, a group; the group structure belongs to the based construction above.
\end{remark}

\begin{definition}[Swept-area length on the admissible fundamental group]
For $\alpha\in\pi_1^\Lambda(\widetilde C,\gamma_*)$, define
\[
  \ell_\Lambda(\alpha)=
  \inf\{A(\Gamma)\mid \Gamma \text{ is a based $\Lambda$-admissible loop representing }\alpha\}.
\]
\end{definition}

\begin{definition}[Swept-area length group]
The pair
\[
  \mathcal G_\Lambda(\widetilde C,\gamma_*)
  =
  \bigl(\pi_1^\Lambda(\widetilde C,\gamma_*),\ell_\Lambda\bigr)
\]
is called the \emph{swept-area length group} of the based admissible component
$(\widetilde C,\gamma_*)$.
\end{definition}

\begin{theorem}[Swept-area group length]
\label{thm:area-group-length}
The function
\[
  \ell_\Lambda:\pi_1^\Lambda(\widetilde C,\gamma_*)\longrightarrow[0,+\infty)
\]
satisfies
\[
  \ell_\Lambda(e)=0,
  \qquad
  \ell_\Lambda(\alpha^{-1})=\ell_\Lambda(\alpha),
\]
and
\[
  \ell_\Lambda(\alpha\beta)
  \le
  \ell_\Lambda(\alpha)+\ell_\Lambda(\beta).
\]
Thus $\ell_\Lambda$ is a group length function, possibly degenerate on nontrivial elements.
\end{theorem}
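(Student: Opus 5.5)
The plan is to verify the three properties directly from the definition of $\ell_\Lambda$ as an infimum over based representatives. We use the basic properties of swept area (nonnegativity, time-reversal invariance, additivity under concatenation) and the group structure of \cref{prop:admissible-fundamental-group}. We first check that $\ell_\Lambda$ takes values in $[0,+\infty)$. Nonnegativity is immediate from $A(\Gamma)\ge0$. Finiteness needs every class to contain a representative of finite area. Admissible loops are Lipschitz on the compact cylinder $S^1\times[0,1]$, so $J_2\Gamma$ is bounded. Hence $A(\Gamma)<\infty$ for every representative, and each class is nonempty by definition.

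For the identity, the constant loop $\Gamma(s,t)=\gamma_*(s)$ is a based $\Lambda$-admissible loop representing $e$. Its time derivative vanishes, so $A=0$. Combined with nonnegativity this gives $\ell_\Lambda(e)=0$.

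For inverses, let $\Gamma$ represent $\alpha$. By the definition of the group, $\overline\Gamma$ represents $\alpha^{-1}$, and the basic properties give $A(\overline\Gamma)=A(\Gamma)$. Taking the infimum over $\Gamma$ yields $\ell_\Lambda(\alpha^{-1})\le\ell_\Lambda(\alpha)$. Applying the same inequality to $\alpha^{-1}$, and using $\overline{\overline\Gamma}=\Gamma$ so that $(\alpha^{-1})^{-1}=\alpha$, gives the reverse inequality.

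For subadditivity, fix $\eps>0$. Choose based representatives $\Gamma_1$ of $\alpha$ and $\Gamma_2$ of $\beta$ with $A(\Gamma_i)$ within $\eps$ of the respective infima. Since both loops start and end at the same actual curve $\gamma_*$, no alignment by isometries or reparametrizations is needed, in contrast with \cref{prop:pseudometric-formal}. The concatenation $\Gamma_1*\Gamma_2$ is therefore a based $\Lambda$-admissible loop, and it represents $\alpha\beta$ by definition of the product. Additivity of swept area gives
\[
  \ell_\Lambda(\alpha\beta)\le A(\Gamma_1*\Gamma_2)=A(\Gamma_1)+A(\Gamma_2)\le\ell_\Lambda(\alpha)+\ell_\Lambda(\beta)+2\eps .
\]
Letting $\eps\downarrow0$ finishes the proof.

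The only point requiring care, and the main potential obstacle, is that the concatenation stays in the regularity class. The linear time reparametrization on each half-interval preserves the Lipschitz property. The two pieces agree at the junction $t=1/2$ (both equal $\gamma_*$), so the glued map is Lipschitz on the whole cylinder. Each time slice is a slice of $\Gamma_1$ or $\Gamma_2$, which gives admissibility. Additivity holds because the junction $\{t=1/2\}$ has measure zero. The closing remark that $\ell_\Lambda$ may vanish on nontrivial elements is not a claim to be proved. It only records that no separation property is asserted.
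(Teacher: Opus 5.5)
Your proof is correct and follows the same route as the paper: the constant loop gives $\ell_\Lambda(e)=0$, time-reversal invariance of swept area handles inverses, and concatenating $\eps$-near-optimal representatives with additivity gives subadditivity. Your extra checks fill in details the paper leaves implicit: finiteness from the Lipschitz bound on $J_2\Gamma$, the two-sided argument for $\ell_\Lambda(\alpha^{-1})=\ell_\Lambda(\alpha)$, and the Lipschitz regularity of the glued loop.
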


\begin{proof}
The constant loop represents the identity and has swept area zero.  Time reversal preserves swept area, so $\ell_\Lambda(\alpha^{-1})=\ell_\Lambda(\alpha)$.  For subadditivity, choose representatives $\Gamma_\alpha$ and $\Gamma_\beta$ with swept areas within $\eps$ of the two infima.  The concatenation represents $\alpha\beta$ and satisfies
\[
  A(\Gamma_\alpha*\Gamma_\beta)=A(\Gamma_\alpha)+A(\Gamma_\beta).
\]
Letting $\eps\to0$ gives the desired inequality.
\end{proof}

\begin{proposition}[Basepoint-change control]
\label{prop:basepoint-change}
Write $\ell_{\Lambda,\gamma_*}$ when the base representative needs to be
displayed explicitly.
Let $P$ be a $\Lambda$-admissible path from $\gamma_*$ to $\gamma_*'$
inside $\widetilde C$.  The usual basepoint-change isomorphism
\[
  \Phi_P:\pi_1^\Lambda(\widetilde C,\gamma_*)
  \longrightarrow
  \pi_1^\Lambda(\widetilde C,\gamma_*')
\]
is represented by $\Gamma\mapsto\overline P*\Gamma*P$ and satisfies
\[
  \bigl|\ell_{\Lambda,\gamma_*'}(\Phi_P(\alpha))
       -\ell_{\Lambda,\gamma_*}(\alpha)\bigr|
  \le 2A(P).
\]
\end{proposition}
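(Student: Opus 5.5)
The plan is to first check that $\Phi_P$ is a well-defined homomorphism at the level of based admissible homotopy classes, and then compare areas using the concatenation additivity of swept area. For any based $\Lambda$-admissible loop $\Gamma$ at $\gamma_*$, the concatenation $\overline P*\Gamma*P$ starts and ends at $\gamma_*'$. Every time slice of it is a time slice of $P$ or of $\Gamma$, so it is a based $\Lambda$-admissible loop at $\gamma_*'$. If $H$ is a based admissible homotopy from $\Gamma_0$ to $\Gamma_1$, then $u\mapsto \overline P*H_u*P$ is a based admissible homotopy at $\gamma_*'$. Hence $\Phi_P$ is well defined on classes.

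The homomorphism property and bijectivity follow from the reparametrization homotopies already used in \cref{prop:admissible-fundamental-group}. One inserts $P*\overline P$ between $\Gamma_1$ and $\Gamma_2$ and contracts it by the triangular-function homotopy $r_u$. The inverse map is $\Phi_{\overline P}$, since $P*\overline P$ and $\overline P*P$ contract in the same way. All intermediate loops have time slices drawn from $P$, $\Gamma_i$, or constants, so admissibility is never lost. I will note carefully that these homotopies fix the base curve at every stage.

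For the inequality, take any based representative $\Gamma$ of $\alpha$ with $A(\Gamma)\le \ell_{\Lambda,\gamma_*}(\alpha)+\eps$. By part (3) of the basic-properties lemma, together with time-reversal invariance (part (2)),
\[
  \ell_{\Lambda,\gamma_*'}(\Phi_P(\alpha))\le A(\overline P*\Gamma*P)=A(\Gamma)+2A(P)\le \ell_{\Lambda,\gamma_*}(\alpha)+2A(P)+\eps .
\]
Letting $\eps\downarrow 0$ gives one direction. For the reverse direction, apply the same argument to $\Phi_{\overline P}$, which is inverse to $\Phi_P$, with $A(\overline P)=A(P)$. This yields $\ell_{\Lambda,\gamma_*}(\alpha)\le \ell_{\Lambda,\gamma_*'}(\Phi_P(\alpha))+2A(P)$. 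Combining the two gives the absolute-value bound. If $A(P)=\infty$ the statement is vacuous, although for admissible Lipschitz paths $A(P)$ is finite anyway.

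The main obstacle is the bookkeeping in the second paragraph. The identity $\Phi_{\overline P}\circ\Phi_P=\mathrm{id}$ requires the based homotopy from $P*\overline P*\Gamma*P*\overline P$ to $\Gamma$ to stay in the regularity class. This works because the homotopy only reparametrizes time by piecewise linear monotone or triangular maps, so it preserves Lipschitz regularity and all slice constraints. The area estimate itself is routine.
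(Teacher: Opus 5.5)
Your proposal is correct and takes essentially the paper's route: concatenation additivity together with $A(\overline P)=A(P)$ gives $\ell_{\Lambda,\gamma_*'}(\Phi_P(\alpha))\le\ell_{\Lambda,\gamma_*}(\alpha)+2A(P)$, and applying the same estimate to the inverse basepoint change $\Phi_{\overline P}$ gives the reverse inequality. Your additional check that $\Phi_P$ is a well-defined isomorphism with inverse $\Phi_{\overline P}$, using time-reparametrization homotopies that keep every slice admissible, fills in what the paper simply calls the ``usual'' basepoint-change isomorphism.
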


\begin{proof}
Concatenation gives
\[
  \ell_{\Lambda,\gamma_*'}(\Phi_P(\alpha))
  \le \ell_{\Lambda,\gamma_*}(\alpha)+2A(P).
\]
Apply the same estimate to the inverse basepoint change, represented by
$\overline P$, to obtain the reverse inequality.
\end{proof}

\begin{remark}[No conjugation-invariance claim]
The axioms proved in \cref{thm:area-group-length} do not imply
conjugation invariance, and such invariance is not asserted here.  Indeed,
the direct estimate is
\[
  \ell_\Lambda(\beta\alpha\beta^{-1})
  \le \ell_\Lambda(\alpha)+2\ell_\Lambda(\beta).
\]
Free loop classes, rather than based group elements, are the natural setting
for a conjugacy-invariant spectrum.
\end{remark}

\begin{remark}[Basic scope of the swept-area length group]
The swept-area length group is used here only as the basic group-valued invariant associated with a based admissible component.  Further refinements, such as stable lengths, spectra, growth functions, or filling functions, are natural but will not be pursued in this paper.
\end{remark}

\section{Static geometry, filtered topology, and dynamic cost}
\label{sec:three-layer}

We now record a conceptual structure which will be used throughout the rest of the paper.  There are three complementary levels of geometry associated with a knot type.  The terminology is related to the author's preprints \cite{OzawaUnifiedPDensities,OzawaGeometricDensities,OzawaIdealStratum,OzawaFiniteKnotTheory}, but the definitions used below are given explicitly here and no proof in this paper depends on results from those preprints.

\begin{enumerate}[label=\textup{(\roman*)}]
  \item \emph{Static geometry}: numerical invariants of individual representatives, such as density and compression radius.
  \item \emph{Filtered topology}: the ropelength sublevel spaces $Y_{\Lambda}(K)$, their admissible components, ideal strata, and merge scales.
  \item \emph{Dynamic geometry}: costs of paths in the filtered spaces, measured here by swept area.
\end{enumerate}

The point of this section is not to prove a definitive comparison theorem, but to make precise how these three layers fit together and to isolate the estimates which would turn static invariants into lower bounds for swept-area distance.

\subsection{Static density and compression-radius invariants}

Let $D$ be a Euclidean-invariant, scale-covariant size functional on embedded curves.  In other words, $D$ is unchanged by orientation-preserving Euclidean isometries and satisfies
\[
  D(\lambda \gamma)=\lambda D(\gamma)
  \qquad
  \text{for }\lambda>0.
\]
Typical examples include the diameter, the radius of a minimal enclosing ball, or a mean-chord size functional.
Associated with $D$ are the $D$-density and the $D$-compression radius
\[
  \rho_D(\gamma)=\frac{\Len(\gamma)}{D(\gamma)},
  \qquad
  \CRad_D(\gamma)=\frac{D(\gamma)}{\Thi(\gamma)}.
\]
They satisfy the pointwise factorization
\[
  \Rop(\gamma)=\rho_D(\gamma)\,\CRad_D(\gamma).
\]
Indeed,
\[
  \rho_D(\gamma)\,\CRad_D(\gamma)
  =\frac{\Len(\gamma)}{D(\gamma)}\cdot\frac{D(\gamma)}{\Thi(\gamma)}
  =\frac{\Len(\gamma)}{\Thi(\gamma)}
  =\Rop(\gamma).
\]
Thus density and compression radius split ropelength into a length-to-size part and a size-to-thickness part.  This elementary identity is the only factorization property used in the present paper; the cited preprint is used here only for motivation and terminology.

After thickness normalization, $\Thi(\gamma)\ge 1$, the compression radius is closely related to the size functional $D(\gamma)$, while the density records how efficiently length is distributed relative to that size.  These Euclidean- and reparametrization-invariant quantities descend to functions on the filtered space $Y_{\Lambda}(K)$.

For mean-chord constructions, following \cite{OzawaUnifiedPDensities}, one may similarly take a family of size functionals $D_p(\gamma)$ and define
\[
  \rho_p(\gamma)=\frac{\Len(\gamma)}{D_p(\gamma)}.
\]
The exact definition of $D_p$ is not needed here.  What matters for the present framework is that such quantities are scale-free functions on the same ropelength-filtered spaces on which the swept-area metric is defined.

\subsection{Filtered topology}

The spaces $Y_{\Lambda}(K)$ form the filtered topological layer, in the sense of \cite{OzawaIdealStratum}.  They record which thick representatives are available at a given length level.  Their admissible components encode qualitative connectivity, while merge scales record the first value of $\Lambda$ at which two components become connected.

Thus filtered topology answers the question
\[
  \text{When are two representatives connected under a thickness and length constraint?}
\]
It does not, by itself, measure how costly such a connection is.

\subsection{Dynamic swept-area cost}

The swept-area metric is the dynamic layer.  If $x_0$ and $x_1$ lie in the same admissible component of $Y_{\Lambda}(K)$, then
\[
  d_{\Lambda}^{K}(x_0,x_1)
  =
  \inf_{\Gamma} A(\Gamma)
\]
measures the least area cost of an admissible deformation from the class $x_0$ to the class $x_1$.  In this sense, $d_{\Lambda}^{K}$ is not another static invariant of a single curve.  It is a cost assigned to paths, or equivalently a metric structure on the filtered space.

This gives the following schematic picture:
\[
  \begin{array}{ccl}
  \text{static geometry} &:& \rho_D,\ \CRad_D,\ \rho_p
  \quad \text{as functions on points},\\[2mm]
  \text{filtered topology} &:& Y_{\Lambda}(K),\ I(K),\ m(C_i,C_j)
  \quad \text{as connectivity data},\\[2mm]
  \text{dynamic geometry} &:& d_{\Lambda}^{K}
  \quad \text{as path cost}.
  \end{array}
\]

\subsection{A conditional comparison principle}

A natural way to relate the static and dynamic layers is to ask whether variation of a static invariant along an admissible isotopy can be controlled by swept area.  This is not automatic.  It depends on the regularity class and on the chosen size functional.  We therefore separate a simple conditional statement from the open problems.

Let $F:Y_\Lambda(K)\to\R$ be a real-valued function on moduli classes.  We say that $F$ is \emph{area-Lipschitz on $Y_\Lambda(K)$ with constant $L_F$} if every admissible isotopy $\Gamma$ whose endpoints represent $x_0,x_1$ satisfies
\[
  |F(x_1)-F(x_0)|\le L_F A(\Gamma).
\]
This is a strong hypothesis, but it is exactly the hypothesis needed to turn a static invariant into a lower bound for swept-area distance.

\begin{proposition}[Static variation gives a lower bound]
\label{prop:area-lipschitz-lower-bound}
Let $F$ be area-Lipschitz on $Y_\Lambda(K)$ with constant $L_F$.  Then, for any $x_0,x_1\in Y_\Lambda(K)$ lying in the same admissible component,
\[
  |F(x_1)-F(x_0)|
  \le L_F d_\Lambda^K(x_0,x_1).
\]
Equivalently,
\[
  d_\Lambda^K(x_0,x_1)
  \ge \frac{|F(x_1)-F(x_0)|}{L_F}
\]
whenever $L_F>0$.
\end{proposition}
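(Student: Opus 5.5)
The plan is to take the infimum of the area-Lipschitz inequality over all admissible isotopies that enter the definition of $d_\Lambda^K(x_0,x_1)$. Because $x_0$ and $x_1$ lie in the same admissible component, \cref{prop:pseudometric-formal} shows that $d_\Lambda^K(x_0,x_1)$ is finite. So the set of admissible isotopies $\Gamma$ whose initial curve represents $x_0$ and whose terminal curve represents $x_1$ is nonempty. For each such $\Gamma$, the hypothesis that $F$ is area-Lipschitz gives
\[
  |F(x_1)-F(x_0)|\le L_F A(\Gamma).
\]

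One point needs checking. The quotient cost is defined by optimizing an exact-endpoint cost over aligned representatives $g\circ\gamma_1\circ\phi$. I will use the equivalent description recorded right after that definition: $d_\Lambda^K(x_0,x_1)$ is the infimum of $A(\Gamma)$ over admissible isotopies whose endpoints merely represent $x_0$ and $x_1$. The terminal curve $g\circ\gamma_1\circ\phi$ of any competing isotopy represents $x_1$. Since $F$ is a function on moduli classes, every such isotopy is covered by the area-Lipschitz hypothesis with the same value $F(x_1)$.

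The left-hand side of the inequality does not depend on $\Gamma$, so taking the infimum over $\Gamma$ gives $|F(x_1)-F(x_0)|\le L_F\, d_\Lambda^K(x_0,x_1)$. I will state the case $L_F=0$ separately; then the inequality reads $0\le 0$. When $L_F>0$, dividing by $L_F$ gives the equivalent lower bound on $d_\Lambda^K(x_0,x_1)$. Alternatively, one can fix $\eps>0$, pick $\Gamma$ with $A(\Gamma)\le d_\Lambda^K(x_0,x_1)+\eps$, obtain $|F(x_1)-F(x_0)|\le L_F\,d_\Lambda^K(x_0,x_1)+L_F\eps$, and let $\eps\downarrow0$.

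There is no real obstacle here. The only care needed is in matching the class of isotopies in the area-Lipschitz hypothesis with the class used in the quotient infimum, and in using the finiteness on an admissible component.
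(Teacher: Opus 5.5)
Your proposal is correct and follows the same route as the paper: the area-Lipschitz inequality holds for every admissible isotopy with endpoints representing $x_0$ and $x_1$, and taking the infimum over such isotopies gives the bound. Your extra checks (finiteness on an admissible component so the competing set is nonempty, and the separate treatment of $L_F=0$) are harmless refinements of the paper's one-line argument.
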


\begin{proof}
The defining inequality holds for every admissible isotopy with endpoints in the two moduli classes.  Taking the infimum gives the result.
\end{proof}

In particular, if $F$ is one of the static quantities $\rho_D$, $\CRad_D$, or $\rho_p$, then any independent proof of area-Lipschitz control for $F$ immediately yields a nontrivial lower bound for swept-area distance.  Establishing such control for natural functionals such as diameter, enclosing radius, or mean-chord size is a separate analytic problem.  We do not claim it in full generality here.

A first nontrivial instance is obtained from the projected-area vector.  For an oriented closed curve define
\[
  \AreaVec(\gamma)
  =\frac12\int_{S^1}\gamma(s)\times\partial_s\gamma(s)\,ds
  =\frac12\int_\gamma x\times dx\in\R^3.
\]
If $n$ is an oriented unit normal and $\Pi=n^\perp$, then
\[
  \mathcal A_\Pi(\gamma)=n\cdot\AreaVec(\gamma)
\]
is the signed area of the orthogonal projection to $\Pi$.  The vector is translation invariant and transforms by
$\AreaVec(g\gamma)=R\AreaVec(\gamma)$ when $g(x)=Rx+a$.

\begin{proposition}[Projected-area vector is area-Lipschitz]
\label{prop:area-vector-lipschitz}
Let $\Gamma$ be an admissible isotopy from an oriented curve $\gamma_0$ to an oriented curve $\gamma_1$.  Then
\[
  |\AreaVec(\gamma_1)-\AreaVec(\gamma_0)|\le A(\Gamma).
\]
Consequently, for the exact-endpoint cost,
\[
  \widetilde d_\Lambda^K(\gamma_0,\gamma_1)
  \ge |\AreaVec(\gamma_1)-\AreaVec(\gamma_0)|.
\]
\end{proposition}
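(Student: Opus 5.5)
The plan is to compute $\AreaVec(\gamma_1)-\AreaVec(\gamma_0)$ as the integral of a pulled-back constant-coefficient two-form over the parameter cylinder, and then use the comass bound stated in the remark on self-intersections and multiplicity. Fix a unit vector $n\in\R^3$ and set $\omega_n=\frac12\,n\cdot(dx\times dx)$, i.e.\ $\omega_n=n_1\,dx_2\wedge dx_3+n_2\,dx_3\wedge dx_1+n_3\,dx_1\wedge dx_2$. This form is constant, has comass $|n|=1$, and is $d\eta_n$ for the one-form $\eta_n(x)=\frac12\,n\cdot(x\times dx)$. By construction,
\[
  \int_{\gamma}\eta_n=n\cdot\AreaVec(\gamma).
\]
Since $\Gamma$ is Lipschitz on $S^1\times[0,1]$, Stokes' theorem for Lipschitz maps holds. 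Concretely, $\partial S_\Gamma=T_{\gamma_1}-T_{\gamma_0}$ as in the proof of \cref{lem:flat-current-lower-bound}, and pushing forward commutes with exterior derivative. This gives
\[
  n\cdot\bigl(\AreaVec(\gamma_1)-\AreaVec(\gamma_0)\bigr)
  =\partial S_\Gamma(\eta_n)=S_\Gamma(\omega_n)
  =\int_{S^1\times[0,1]}\Gamma^*\omega_n .
\]
If one prefers to avoid currents, the same identity follows by mollifying $\Gamma$ in $(s,t)$ and passing to the limit. The pullback is
\[
  \Gamma^*\omega_n=n\cdot(\partial_s\Gamma\times\partial_t\Gamma)\,ds\wedge dt
\]
almost everywhere, up to the sign fixed by orienting the cylinder so that its boundary is $\gamma_1-\gamma_0$. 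Its absolute value is therefore at most $|\partial_s\Gamma\times\partial_t\Gamma|=J_2\Gamma$. Integrating gives
\[
  n\cdot(\AreaVec(\gamma_1)-\AreaVec(\gamma_0))\le A(\Gamma).
\]
To finish the first claim, choose $n$ parallel to $\AreaVec(\gamma_1)-\AreaVec(\gamma_0)$ (any $n$ works if the difference vanishes).

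The second claim follows by taking the infimum over all $\Lambda$-admissible isotopies with exact endpoints $\gamma_0,\gamma_1$. If no such isotopy exists, the left side is $+\infty$ and the bound is trivial. Admissibility itself plays no role beyond the Lipschitz regularity.

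The step needing the most care is the Stokes identity at the Lipschitz level and the orientation bookkeeping. I would handle it by a direct calculation rather than through currents. Set $F(t)=\frac12\int_{S^1}\Gamma\times\partial_s\Gamma\,ds$. For smooth $\Gamma$, differentiate in $t$ and integrate by parts in $s$ using periodicity:
\[
  F'(t)=\frac12\int\bigl(\partial_t\Gamma\times\partial_s\Gamma+\Gamma\times\partial_s\partial_t\Gamma\bigr)ds
  =\int\partial_t\Gamma\times\partial_s\Gamma\,ds .
\]
Hence
\[
  \AreaVec(\gamma_1)-\AreaVec(\gamma_0)=\int_{S^1\times[0,1]}\partial_t\Gamma\times\partial_s\Gamma\,ds\,dt,
\]
whose norm is at most $A(\Gamma)$ by Minkowski's integral inequality. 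For Lipschitz $\Gamma$, mollify in both variables. The endpoint curves converge uniformly with $L^1$-convergent $s$-derivatives, so $\AreaVec$ converges at both ends. The integrand $\partial_t\Gamma^\delta\times\partial_s\Gamma^\delta$ is uniformly bounded and converges almost everywhere, so dominated convergence passes the identity to the limit. This calculation also fixes the sign unambiguously, which makes the vector form of the statement immediate without choosing $n$.
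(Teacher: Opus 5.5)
Your proposal is correct and follows the paper's proof. Both use the constant comass-one form $\omega_n=d\eta_n$, Stokes' theorem for the Lipschitz trace, the pointwise bound $|\Gamma^*\omega_n|\le J_2\Gamma$, optimization over the unit vector $n$, and then the infimum over exact-endpoint isotopies; your hand computation of $\frac{d}{dt}\frac12\int\Gamma\times\partial_s\Gamma\,ds$ is just this Stokes identity written out explicitly.

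One small caveat concerns your mollification fallback. $L^1$ convergence of the $s$-derivatives of the endpoint slices of $\Gamma^\delta$ is not automatic for a general Lipschitz $\Gamma$. It is also not needed: periodic integration by parts gives $\AreaVec(\beta)-\AreaVec(\alpha)=\frac12\int(\beta-\alpha)\times(\beta'+\alpha')\,ds$. This identity shows that $\AreaVec$ is continuous under uniform convergence with uniformly bounded derivatives, which is all you use.
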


\begin{proof}
For a unit vector $n$, let $\omega_n$ be the constant $2$-form
$\omega_n(v,w)=n\cdot(v\times w)$.  It has comass one and is the exterior
derivative of
\[
  \alpha_n=\frac12,n\cdot(x\times dx).
\]
Stokes' theorem therefore gives
\[
  n\cdot\bigl(\AreaVec(\gamma_1)-\AreaVec(\gamma_0)\bigr)
  =\int_{S^1\times[0,1]}\Gamma^*\omega_n.
\]
Hence
\[
  \left|n\cdot\bigl(\AreaVec(\gamma_1)-\AreaVec(\gamma_0)\bigr)\right|
  \le A(\Gamma).
\]
Taking the supremum over unit $n$ proves the vector inequality, and taking the infimum over exact-endpoint isotopies proves the second assertion.
\end{proof}

\begin{corollary}[Fixed-frame projection calibration]
\label{cor:plane-calibration}
For exact endpoint representatives,
\[
  \widetilde d_\Lambda^K(\gamma_0,\gamma_1)
  \ge
  \sup_{\Pi}\,|\mathcal A_\Pi(\gamma_1)-\mathcal A_\Pi(\gamma_0)|
  =|\AreaVec(\gamma_1)-\AreaVec(\gamma_0)|,
\]
where the supremum is taken over all oriented two-planes.
\end{corollary}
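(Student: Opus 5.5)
The corollary has two parts: an identity between the supremum over oriented planes and the norm of the area-vector difference, and an inequality that then follows from \cref{prop:area-vector-lipschitz}. I will prove the identity first and then invoke the proposition.

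\textbf{Step 1: the identity.} Every oriented two-plane $\Pi$ through the origin corresponds to a unique oriented unit normal $n$, so that $\Pi=n^\perp$. Conversely, every unit vector $n$ arises this way from some oriented plane. Translating the plane does not change the orthogonal projection of the curve up to translation, so it does not change the signed projected area. It therefore suffices to consider planes through the origin. By the definition $\mathcal A_\Pi(\gamma)=n\cdot\AreaVec(\gamma)$ and linearity in $\gamma$'s area vector, we get
\[
  \mathcal A_\Pi(\gamma_1)-\mathcal A_\Pi(\gamma_0)=n\cdot\bigl(\AreaVec(\gamma_1)-\AreaVec(\gamma_0)\bigr).
\]
Set $v=\AreaVec(\gamma_1)-\AreaVec(\gamma_0)$. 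Then
\[
  \sup_\Pi|\mathcal A_\Pi(\gamma_1)-\mathcal A_\Pi(\gamma_0)|=\sup_{|n|=1}|n\cdot v|.
\]
By Cauchy--Schwarz this is at most $|v|$. If $v\ne0$, equality is attained at $n=v/|v|$. If $v=0$, both sides are zero. This proves the identity.

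\textbf{Step 2: the inequality.} \cref{prop:area-vector-lipschitz} gives
\[
  |v|\le A(\Gamma)
\]
for every $\Lambda$-admissible isotopy $\Gamma$ from $\gamma_0$ to $\gamma_1$. Taking the infimum over such $\Gamma$ yields $\widetilde d_\Lambda^K(\gamma_0,\gamma_1)\ge|v|$. If no admissible isotopy exists, the left side is $+\infty$ and the inequality is trivial.

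There is no real obstacle. The only point needing care is that the correspondence between oriented planes and unit normals is a bijection, so that the supremum over planes equals the supremum over the whole unit sphere.
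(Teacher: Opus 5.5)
Your proposal is correct and matches the paper's route. The paper gives no separate proof: the identity is just $\mathcal A_\Pi(\gamma)=n\cdot\AreaVec(\gamma)$ together with $\sup_{|n|=1}|n\cdot v|=|v|$, and the inequality is exactly \cref{prop:area-vector-lipschitz}, whose proof already bounds each $|n\cdot(\AreaVec(\gamma_1)-\AreaVec(\gamma_0))|$ by $A(\Gamma)$ before taking the supremum over unit $n$ (your remark on translated planes is harmless but unnecessary, since the paper takes $\Pi=n^\perp$).
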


\begin{corollary}[Euclidean-invariant quotient calibration]
\label{cor:quotient-area-calibration}
The quantity
\[
  \mathscr A(x)=|\AreaVec(\gamma)|,
  \qquad x=[\gamma]\in Y_\Lambda(K),
\]
is well defined.  For all $x_i=[\gamma_i]\in Y_\Lambda(K)$,
\[
  d_\Lambda^K(x_0,x_1)
  \ge
  \bigl|\,|\AreaVec(\gamma_1)|-|\AreaVec(\gamma_0)|\,\bigr|
  =|\mathscr A(x_1)-\mathscr A(x_0)|.
\]
\end{corollary}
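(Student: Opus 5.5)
The plan has three steps: show that $\mathscr A$ is well defined, reduce the quotient bound to the exact-endpoint bound of \cref{prop:area-vector-lipschitz}, and then take infima.

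\emph{Well-definedness.} Two representatives of the same class differ by $\gamma'=g\circ\gamma\circ\phi$, where $g(x)=Rx+a$ and $\phi\in\Diff^{1,1}_+(S^1)$. The integral $\frac12\int_\gamma x\times dx$ depends only on the oriented image, so precomposing with $\phi$ leaves $\AreaVec$ unchanged. Under $g$, the translation part drops out because $\int a\times dx=a\times\int dx=0$ for a closed curve. The rotation part gives $\AreaVec(g\gamma)=R\AreaVec(\gamma)$. Since $R\in SO(3)$ preserves Euclidean norms, $|\AreaVec(\gamma')|=|\AreaVec(\gamma)|$.

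\emph{Reduction to exact endpoints.} Fix representatives $\gamma_0,\gamma_1$. By definition, $d_\Lambda^K(x_0,x_1)$ is the infimum of $\widetilde d_\Lambda^K(\gamma_0,g\circ\gamma_1\circ\phi)$ over alignments $(g,\phi)$. If the quotient distance is $+\infty$ there is nothing to prove, so assume it is finite.

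For each alignment, \cref{prop:area-vector-lipschitz} gives
\[
\widetilde d_\Lambda^K(\gamma_0,g\circ\gamma_1\circ\phi)\ge\bigl|\AreaVec(g\circ\gamma_1\circ\phi)-\AreaVec(\gamma_0)\bigr|.
\]
The reverse triangle inequality in $\R^3$ then bounds the right-hand side below by
\[
\bigl|\,|\AreaVec(g\circ\gamma_1\circ\phi)|-|\AreaVec(\gamma_0)|\,\bigr|.
\]
By the first step this equals $\bigl|\,|\AreaVec(\gamma_1)|-|\AreaVec(\gamma_0)|\,\bigr|$, independently of $(g,\phi)$.

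\emph{Conclusion.} The lower bound does not depend on the alignment, so taking the infimum over $(g,\phi)$ yields the claim.

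I expect no real obstacle. The one point needing care is the order of operations: the norm difference must be taken before the infimum over alignments. The difference vector itself is not alignment-invariant, which is exactly why the quotient statement uses norms rather than the vector bound of \cref{cor:plane-calibration}.
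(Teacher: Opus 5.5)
Your proof is correct and follows the paper's argument: you get well-definedness of $\mathscr A$ from invariance of $\AreaVec$ under translation and orientation-preserving reparametrization together with its rotation equivariance, then apply \cref{prop:area-vector-lipschitz} to each endpoint alignment, use the reverse triangle inequality, and take the infimum over alignments. Your explicit translation computation and your remark that the norms must be compared before taking the infimum are accurate, but they only spell out what the paper's short proof already uses.
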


\begin{proof}
Translation leaves $\AreaVec$ unchanged, rotation rotates it, and an orientation-preserving reparametrization leaves it unchanged; hence its norm is a function on the quotient.  For every endpoint alignment with rotational part $R$, \cref{prop:area-vector-lipschitz} gives
\[
  A(\Gamma)\ge|R\AreaVec(\gamma_1)-\AreaVec(\gamma_0)|
  \ge\bigl|\,|\AreaVec(\gamma_1)|-|\AreaVec(\gamma_0)|\,\bigr|.
\]
Taking the quotient infimum proves the result.
\end{proof}

\begin{corollary}[Positive quotient distance from projected area]
\label{cor:positive-projected-area}
If two moduli classes have different projected-area-vector norms, then their swept-area distance is positive.
\end{corollary}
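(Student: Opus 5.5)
This corollary follows directly from \cref{cor:quotient-area-calibration}. Let $x_0=[\gamma_0]$ and $x_1=[\gamma_1]$ be classes in $Y_\Lambda(K)$ with $\mathscr A(x_0)\neq\mathscr A(x_1)$. By the well-definedness part of \cref{cor:quotient-area-calibration}, this hypothesis does not depend on the chosen representatives $\gamma_i$. The inequality of that corollary then gives
\[
  d_\Lambda^K(x_0,x_1)\ \ge\ \bigl|\mathscr A(x_1)-\mathscr A(x_0)\bigr|\ >\ 0 .
\]

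The only point needing care is the case where $x_0$ and $x_1$ lie in different admissible components. There $d_\Lambda^K(x_0,x_1)=+\infty$ by definition, which is positive. This is consistent with the calibration inequality, since that inequality was proved by bounding $A(\Gamma)$ below for every admissible isotopy $\Gamma$ with aligned endpoints, and an infimum over the empty set is $+\infty$.

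I do not expect a real obstacle. The substantive work is already done in \cref{prop:area-vector-lipschitz}, via Stokes' theorem against the comass-one forms $\omega_n$, and in the invariance of $|\AreaVec|$ under Euclidean isometries and reparametrizations.

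As a remark, the conclusion also follows from \cref{thm:swept-area-metric}: different area-vector norms force $x_0\neq x_1$, hence positive distance. The calibration route is better because it gives the explicit quantitative lower bound $|\mathscr A(x_1)-\mathscr A(x_0)|$.
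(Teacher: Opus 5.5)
Your proof is correct and follows the paper's route: the paper states this corollary without a separate proof, as an immediate consequence of \cref{cor:quotient-area-calibration}, since $d_\Lambda^K(x_0,x_1)\ge|\mathscr A(x_1)-\mathscr A(x_0)|>0$, and the case of different admissible components gives $+\infty$. Your side remark that separation in \cref{thm:swept-area-metric} also yields positivity is valid, but, as you note, it gives no quantitative bound.
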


\begin{remark}
The fixed-frame bound must not be applied directly to two arbitrary representatives of separate Euclidean orbits: an independent rotation of one endpoint changes the vector difference.  The norm-difference bound in \cref{cor:quotient-area-calibration} is the corresponding well-defined statement on $Y_\Lambda(K)$.  These calibrations are concrete consequences of the more general current-theoretic lower bound in \cref{lem:flat-current-lower-bound}.

The corresponding statements for diameter, minimal enclosing radius, mean-chord size, density, and compression radius require additional analytic hypotheses and are kept as open problems below.
\end{remark}

\begin{remark}[Total curvature]
A natural candidate for another static quantity is total curvature $\TC(\gamma)$, classically studied by Fenchel and Milnor \cite{Fenchel,Milnor}.  The present paper does not claim a general inequality bounding $|\TC(\gamma_1)-\TC(\gamma_0)|$ by swept area.  Such an estimate would require control of tangent variation, not only control of the parametrized trace area.  We therefore record it below as a separate problem rather than using it as a proved comparison theorem.
\end{remark}

\subsection{Compression radius as a possible obstruction}

The compression radius has a particularly direct interpretation in the unit-thickness setting.  If $\Thi(\gamma)=1$, then
\[
  \CRad_D(\gamma)=D(\gamma).
\]
For $D=\diam$ or for the minimal enclosing radius, this measures the spatial scale occupied by the representative.  If two representatives have substantially different $D$-sizes, then any isotopy between them must change a global size parameter.  Proposition~\ref{prop:area-lipschitz-lower-bound} explains the precise form of the desired conclusion: once $D$ or $\CRad_D$ is known to be area-Lipschitz on a controlled class, its variation becomes a lower bound for swept-area distance.  Without such a hypothesis, inequalities of this type remain guiding problems rather than theorems.

\begin{remark}
The three-layer structure also clarifies why swept area was not needed for defining admissible components or merge scales.  Connectivity and merge thresholds are filtered-topological notions.  Swept area becomes relevant only after one asks for a quantitative cost of a path which already exists in the filtered space.
\end{remark}

\section{Ideal strata and merge-scale refinements}
\label{sec:ideal-strata}

The swept-area metric gives a quantitative refinement of the admissible-component structure of the ropelength filtration developed in \cite{OzawaIdealStratum}.
We first recall the merge scale.

\begin{definition}[Merge scale]
Fix a level $\lambda_0\ge\Rop(K)$, and let $C_{0}$ and $C_{1}$ be admissible components of $Y_{\lambda_0}(K)$.  For $\Lambda\ge\lambda_0$, regard them as subsets of $Y_\Lambda(K)$ under the natural inclusion.
The merge scale of $C_{0}$ and $C_{1}$ is
\[
  m(C_{0},C_{1})=
  \inf\{\Lambda\ge \lambda_0\mid C_{0}\text{ and }C_{1}\text{ lie in the same admissible component of }Y_{\Lambda}(K)\}.
\]
As usual, the infimum of the empty set is $+\infty$.
\end{definition}

Thus $m(C_{0},C_{1})$ records the first length level at which the two components become connectable by a thick isotopy.
It is a qualitative threshold.
The infimum need not itself be a level at which the merge is attained.  Once the components merge, the swept-area metric gives a quantitative cost.

\begin{definition}[Swept-area merge cost]
For every $\Lambda\ge\lambda_0$, define the extended merge cost
\[
  a_{\Lambda}(C_{0},C_{1})=
  \inf\{d_{\Lambda}^{K}(x_{0},x_{1})\mid x_{0}\in C_{0},\ x_{1}\in C_{1}\}.
\]
\end{definition}

Here $C_{0}$ and $C_{1}$ may be interpreted as components at a lower level, for instance components of the ideal stratum.
The value is $+\infty$ while the two components remain disconnected and is finite after they merge.  Thus no attainment at the threshold $m(C_0,C_1)$ is assumed.

In particular, if $I(K)$ has more than one admissible component, then one obtains a two-stage invariant:
\[
  \bigl(m(C_{0},C_{1}),\ a_{\Lambda}(C_{0},C_{1})\bigr).
\]
The first coordinate records when a connection becomes possible; the second records how much area it costs at the chosen level.

When $m(C_0,C_1)<+\infty$, one may also consider the limiting cost
\[
  a_{+}(C_{0},C_{1})=\inf_{\Lambda>m(C_{0},C_{1})}a_{\Lambda}(C_{0},C_{1}).
\]
This quantity should be regarded as a swept-area refinement of the merge scale.

\begin{proposition}[Monotonicity of merge costs]
If $\lambda_0\le\Lambda\le \Lambda'$, then
\[
  a_{\Lambda'}(C_0,C_1)\le a_{\Lambda}(C_0,C_1).
\]
Consequently, $\Lambda\mapsto a_\Lambda(C_0,C_1)$ is nonincreasing as an extended-valued function.  The quantity $a_+(C_0,C_1)$ defined above is therefore the most relaxed post-merge cost over all higher ropelength levels; it need not agree with the immediate right-hand limit at the merge scale.
\end{proposition}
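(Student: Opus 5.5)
The plan is to reduce the statement to the pointwise monotonicity of $d_\Lambda^K$ in \cref{prop:d-lambda-monotone} and then pass to infima. Fix $\lambda_0\le\Lambda\le\Lambda'$. The components $C_0,C_1$ are subsets of $Y_{\lambda_0}(K)$. Through the natural inclusions $Y_{\lambda_0}(K)\subset Y_\Lambda(K)\subset Y_{\Lambda'}(K)$, every pair $(x_0,x_1)\in C_0\times C_1$ is a pair of classes in $Y_\Lambda(K)$. So both $a_\Lambda(C_0,C_1)$ and $a_{\Lambda'}(C_0,C_1)$ are infima over the same index set $C_0\times C_1$; only the metric changes. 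The first step is to record this explicitly, so that no issue arises from components of $Y_\Lambda(K)$ versus $Y_{\Lambda'}(K)$.

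Next, for each such pair, \cref{prop:d-lambda-monotone} gives $d_{\Lambda'}^K(x_0,x_1)\le d_\Lambda^K(x_0,x_1)$. This holds in $[0,+\infty]$: if the right side is $+\infty$ there is nothing to prove. Hence
\[
  a_{\Lambda'}(C_0,C_1)\le d_{\Lambda'}^K(x_0,x_1)\le d_\Lambda^K(x_0,x_1)
\]
for every pair. Taking the infimum over $(x_0,x_1)\in C_0\times C_1$ gives the inequality. The nonincreasing property of $\Lambda\mapsto a_\Lambda(C_0,C_1)$ on $[\lambda_0,\infty)$ is then a restatement, valid as an extended-valued function.

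For the remarks on $a_+$, I will use monotonicity to rewrite $\inf_{\Lambda>m}a_\Lambda$ as $\lim_{\Lambda\downarrow m}a_\Lambda$. This also shows it is at most $a_\Lambda$ for every $\Lambda>m$, which justifies calling it the most relaxed post-merge cost. For the caveat, I will only point out that no attainment or continuity at $\Lambda=m(C_0,C_1)$ is assumed. In particular, $a_{m}$ itself may be $+\infty$ or strictly larger than $a_+$, so $a_+$ need not equal the value at the merge scale. This is a remark rather than something requiring a construction.

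The main point needing care, though not a genuine obstacle, is the identification of index sets in the first step. One must check that $d_\Lambda^K$ is defined on the images of $C_0,C_1$ in $Y_\Lambda(K)$ independently of the chosen representatives. This is guaranteed by the quotient definition and \cref{lem:simultaneous-invariance}, so the argument is a two-line consequence of \cref{prop:d-lambda-monotone}.
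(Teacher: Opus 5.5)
Your argument for $a_{\Lambda'}(C_0,C_1)\le a_\Lambda(C_0,C_1)$ is correct and is essentially the paper's argument. The paper notes directly that every $\Lambda$-admissible isotopy is $\Lambda'$-admissible, so the infimum defining $a_{\Lambda'}$ runs over a larger class of paths. You route the same observation through \cref{prop:d-lambda-monotone} and then take the infimum over the common index set $C_0\times C_1$.

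Your step about $a_+$, however, has the monotonicity backwards. Write $m=m(C_0,C_1)$. Since $\Lambda\mapsto a_\Lambda(C_0,C_1)$ is nonincreasing, $\lim_{\Lambda\downarrow m}a_\Lambda=\sup_{\Lambda>m}a_\Lambda$, which is the immediate post-merge cost $a_{m+}(C_0,C_1)$ of the following remark. By contrast, $a_+(C_0,C_1)=\inf_{\Lambda>m}a_\Lambda=\lim_{\Lambda\to+\infty}a_\Lambda$.

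Your proposed rewriting $a_+=\lim_{\Lambda\downarrow m}a_\Lambda$ therefore asserts exactly the equality $a_+=a_{m+}$ that the statement says may fail. It also contradicts your next sentence: the right-hand limit at $m$ is at least, not at most, every $a_\Lambda$ with $\Lambda>m$.

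Your caveat addresses the wrong comparison. The statement contrasts $a_+$ with the right-hand limit $a_{m+}$, not with the value $a_m$ at the merge scale.

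The correct reading is that $a_+\le a_\Lambda\le a_{m+}$ for every $\Lambda>m$. So $a_+$ is the cheapest cost available at any higher level, which is why it is ``most relaxed''. Moreover, $a_+=a_{m+}$ holds precisely when $a_\Lambda$ is constant on $(m,+\infty)$, and nothing in the definitions forces that.
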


\begin{proof}
Every $\Lambda$-admissible isotopy is also $\Lambda'$-admissible when $\Lambda\le\Lambda'$.  Therefore the infimum defining $a_{\Lambda'}$ is taken over a larger class of paths than the one defining $a_{\Lambda}$, which gives the stated inequality.
\end{proof}

\begin{remark}[Immediate versus relaxed post-merge cost]
Because $a_\Lambda$ is monotone, the right limit exists in $[0,+\infty]$.  If one wants the cost immediately after merging, one should consider
\[
  a_{m+}(C_0,C_1)=\lim_{\epsilon\downarrow0}a_{m(C_0,C_1)+\epsilon}(C_0,C_1),
\]
The invariant $a_+$ is deliberately more relaxed: it allows the length level to increase after the merge and records the cheapest cost available at any higher level.
\end{remark}

\begin{figure}[t]
\centering
\begin{tikzpicture}[x=1cm,y=1cm, every node/.style={font=\small}]
  \draw[-{Latex[length=2mm]}, thick] (-3.6,0) -- (3.6,0) node[right] {$\Lambda$};
  \foreach \x/\lab in {-2.7/{\Rop(K)},0/{m(C_0,C_1)},2.4/{\Lambda}}
    {\draw[thick] (\x,0.10) -- (\x,-0.10); \node[below=0.18cm] at (\x,0) {$\lab$};}
  \node[draw, rounded corners=2pt, fill=gray!10] at (-1.85,1.05) {$C_0$};
  \node[draw, rounded corners=2pt, fill=gray!10] at (-1.05,1.05) {$C_1$};
  \draw[-{Latex[length=2mm]}, thick, dashed] (-0.95,0.78) -- (-0.10,0.18);
  \draw[thick, decorate, decoration={snake, amplitude=.35mm, segment length=3mm}] (0.55,0.58) to[bend left=8] (2.25,0.58);
  \node[align=center] at (1.42,1.28) {$a_\Lambda(C_0,C_1)$\\cost after merge};
  \draw[-{Latex[length=2mm]}, thick] (1.42,0.90) -- (1.42,0.65);
\end{tikzpicture}
\caption{Merge scale and swept-area merge cost, illustrated for components born at the ideal level $\lambda_0=\Rop(K)$.}
\label{fig:merge-cost}
\end{figure}

\section{Polygonal models and finite-dimensional approximations}
\label{sec:polygonal}

The preceding definitions may be formulated either in a smooth setting or in a polygonal setting.
Polygonal models are useful because they give finite-dimensional approximations of ropelength-filtered knot spaces, paralleling the polygonal approximation viewpoints in \cite{OzawaUnifiedPDensities,OzawaGeometricDensities,OzawaIdealStratum}.

Fix an integer $N\ge 3$ once and for all.
All polygonal paths and homotopies in this section are required to stay inside the same labelled $N$-edge polygon space; subdivision changes or moves between different values of $N$ are not part of the definition of $d_{\Lambda,N}^K$.  Polygonal isotopies are represented by absolutely continuous paths of labelled vertices, so the vertex path is differentiable almost everywhere by Rademacher's theorem.
Let
\[
  \Pol_{N}(K)
\]
denote the space of embedded labelled $N$-edge polygons of knot type $K$.
For a polygonal curve $P$, let $\Len(P)$ be its total edge length.  We use the standard polygonal thickness in the sense of Rawdon \cite{Rawdon}, compatible with the thickness theory of Litherland--Simon--Durumeric--Rawdon \cite{LSDR}.  If $v_i$ is a vertex with adjacent edge lengths $\ell_i^-$ and $\ell_i^+$ and exterior turning angle $\theta_i\in[0,\pi]$, set
\[
  \operatorname{Rad}(v_i)=
  \frac{\min\{\ell_i^-,\ell_i^+\}}{2\tan(\theta_i/2)}
\]
with the convention $\operatorname{Rad}(v_i)=+\infty$ when $\theta_i=0$.  Define
\[
  \operatorname{MinRad}(P)=\min_i \operatorname{Rad}(v_i).
\]
Let $\operatorname{dcsd}(P)$ denote the doubly-critical self-distance, namely the minimum distance $|p-q|$ over pairs of distinct points $p,q\in P$ for which the chord $pq$ is perpendicular to the incident tangent direction or tangent cone of $P$ at both endpoints.  The polygonal thickness is then
\[
  \Thi(P)=\min\{\operatorname{MinRad}(P),\,\tfrac12\operatorname{dcsd}(P)\}.
\]

Define the parametrized labelled polygonal sublevel space and its Euclidean quotient by
\[
  \widetilde Y_{\Lambda}^{(N)}(K)=
  \{P\in \Pol_{N}(K)\mid \Thi(P)\ge 1,\ \Len(P)\le \Lambda\},
  \qquad
  Y_{\Lambda}^{(N)}(K)=
  \widetilde Y_{\Lambda}^{(N)}(K)/\Isom^{+}(\R^{3}).
\]

A polygonal isotopy is represented by an absolutely continuous vertex path.  On each edge its trace is the ruled map
\[
  \Gamma:S^{1}\times[0,1]\longrightarrow \R^{3}
\]
whose time slices are polygons with $N$ edges.  If the vertex paths are piecewise linear, the parameter cylinder may be triangulated to give the equivalent piecewise-linear trace description.

\begin{definition}[Polygonal swept-area pseudometric]
For $P_{0},P_{1}\in \widetilde Y_{\Lambda}^{(N)}(K)$, let
\[
  \widetilde d_{\Lambda,N}^{K}(P_{0},P_{1})=\inf_{\Gamma} A(\Gamma),
\]
where the infimum is taken over all exact-endpoint polygonal $\Lambda$-admissible isotopies in the fixed labelled $N$-edge space.  For quotient classes $x_i=[P_i]\in Y_{\Lambda}^{(N)}(K)$, set
\[
  d_{\Lambda,N}^{K}(x_0,x_1)
  =\inf_{g\in\Isom^+(\R^3)}
  \widetilde d_{\Lambda,N}^{K}(P_0,gP_1).
\]

If $Z\subset Y_{\Lambda}^{(N)}(K)$ is a specified subspace, we write
\[
  d_{\Lambda,N}^{K,Z}(x_0,x_1)
\]
for the restricted pseudometric obtained by taking the infimum only over polygonal admissible paths whose quotient classes stay in $Z$.
\end{definition}

The same proof as in the smooth setting gives the following.

\begin{proposition}
For each $N$ and $\Lambda$, the function $d_{\Lambda,N}^{K}$ is an extended pseudometric on $Y_{\Lambda}^{(N)}(K)$ and is finite on each polygonal admissible component.
\end{proposition}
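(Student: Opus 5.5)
The plan is to transfer the argument of \cref{prop:pseudometric-formal} verbatim to the labelled $N$-edge setting, after first checking that each ingredient used there is still available. There are two. The first is that the polygonal swept area of an absolutely continuous vertex path is well defined, nonnegative, invariant under time reversal, and additive under concatenation. Since the trace is ruled on each edge, the integrand $|\partial_s\Gamma\times\partial_t\Gamma|$ is integrable, and the three properties follow from the change of variables $t\mapsto 1-t$ and from splitting the time interval, as in the basic lemma on swept area. The second is simultaneous invariance: if $g\in\Isom^+(\R^3)$ is applied to every time slice of a polygonal $\Lambda$-admissible isotopy, the result is again a polygonal $\Lambda$-admissible isotopy in the same labelled space, with the same swept area. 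This holds because $g$ preserves edge lengths, turning angles, $\operatorname{dcsd}$, hence Rawdon thickness, and also preserves total length and cross-product norms.

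With these facts the axioms follow in order. For reflexivity, the constant vertex path gives $d_{\Lambda,N}^K(x,x)=0$. For symmetry, given an isotopy from $P_0$ to $gP_1$, apply $g^{-1}$ throughout and reverse time. This yields an isotopy from $P_1$ to $g^{-1}P_0$ with the same area, so the two quotient infima coincide. For the triangle inequality, fix $\eps>0$ and choose a path from $P_0$ to $g_{01}P_1$ and a path from $P_1$ to $g_{12}P_2$, each within $\eps$ of its infimum. Apply $g_{01}$ to the entire second path. It now starts at $g_{01}P_1$, ends at $g_{01}g_{12}P_2$, which represents $x_2$, and has unchanged area. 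Concatenating the two absolutely continuous vertex paths gives an admissible path with area at most $d(x_0,x_1)+d(x_1,x_2)+2\eps$; then let $\eps\downarrow0$. If either term on the right is infinite, there is nothing to prove. In the labelled setting there is no reparametrization group, so only the isometry needs to be aligned, which simplifies the argument compared with the smooth case.

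For finiteness, define the polygonal admissible components by the obvious analogue: two classes are equivalent if some representatives are joined by a polygonal $\Lambda$-admissible isotopy. It then suffices to show that any absolutely continuous admissible vertex path has finite swept area. I expect this to be the only point needing care. On edge $i$, with vertex paths $v_i(t)$ and $v_{i+1}(t)$, parametrize the ruled trace as $(1-\sigma)v_i+\sigma v_{i+1}$. Then $\partial_\sigma\Gamma=v_{i+1}-v_i$ has norm at most $\Lambda$, and $|\partial_t\Gamma|\le|\dot v_i|+|\dot v_{i+1}|$. Hence
\[
  A(\Gamma)\le \Lambda\sum_i\int_0^1\bigl(|\dot v_i|+|\dot v_{i+1}|\bigr)\,dt,
\]
which is finite because the vertex paths are absolutely continuous. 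Consequently, classes in the same polygonal admissible component are at finite distance, and classes in different components are at distance $+\infty$ by definition. This completes the plan.
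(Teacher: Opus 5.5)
Your proposal is correct and follows the paper's route. The paper simply asserts that the proof of \cref{prop:pseudometric-formal} carries over, and you transfer exactly that argument: the constant path for reflexivity, time reversal combined with isometry invariance for symmetry, and alignment by a single isometry followed by concatenation for the triangle inequality. Your explicit bound $A(\Gamma)\le 2\Lambda\sum_i\int_0^1|\dot v_i|\,dt$ supplies the finiteness check for absolutely continuous (rather than Lipschitz) vertex paths, which the paper leaves implicit.
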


We next record a quantitative statement that retains the vertex labels.  The current lower bound in \cref{lem:flat-current-lower-bound} separates unparametrized polygonal images, but it deliberately forgets label data.  The following finite-dimensional estimate separates labelled configurations.

\begin{definition}[Uniformly non-collinear polygonal stratum]
Fix constants $\delta>0$ and $0<\eta<\pi/2$.  Let
\[
  Y_{\Lambda,N}^{K}(\delta,\eta)\subset Y_{\Lambda}^{(N)}(K)
\]
be the subspace represented by labelled polygons whose edge lengths are at least $\delta$ and whose exterior angles lie in the interval $[\eta,\pi-\eta]$ at every vertex.
\end{definition}

\begin{lemma}[Area formula for labelled polygonal paths]
\label{lem:polygonal-area-formula}
Let $V(t)=(v_0(t),\ldots,v_{N-1}(t))$ be an absolutely continuous path of labelled polygons, and set $e_i(t)=v_{i+1}(t)-v_i(t)$, with indices modulo $N$.  Let $\Gamma$ be the corresponding piecewise-linear trace, parametrized on the $i$-th edge by
\[
  \Gamma_i(u,t)=v_i(t)+u e_i(t),\qquad 0\le u\le1 .
\]
Then, counting multiplicity and ignoring the vertex traces of measure zero,
\[
  A(\Gamma)=\int_0^1 \sum_{i=0}^{N-1}\int_0^1
  \left|e_i(t)\times\bigl((1-u)\dot v_i(t)+u\dot v_{i+1}(t)\bigr)\right|\,du\,dt .
\]
\end{lemma}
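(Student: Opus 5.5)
The formula follows from the definition of swept area by a direct edge-by-edge computation, so the plan is to decompose the parameter cylinder and apply the area formula on each piece.

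\emph{Decomposition.} Parametrize $S^1$ as the union of $N$ closed arcs $[i,i+1]$, indices modulo $N$, meeting only at the vertex parameters. On the $i$-th arc we use the affine coordinate $u\in[0,1]$, so that $\Gamma$ restricted to $[i,i+1]\times[0,1]$ is the ruled map $\Gamma_i(u,t)=v_i(t)+u e_i(t)$. The vertex traces $\{i\}\times[0,1]$ form a finite union of segments, which has two-dimensional Lebesgue measure zero. By additivity of the integral,
\[
  A(\Gamma)=\sum_{i=0}^{N-1}\int_0^1\int_0^1 J_2\Gamma_i\,du\,dt .
\]
This uses that the swept area is insensitive to the choice of orientation-preserving parametrization of $S^1$, which the reparametrization remark already covers. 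Because $\Gamma$ is only piecewise defined, we take the piecewise-linear reparametrization within that remark's scope, or simply take this affine-per-edge parametrization as the definition.

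\emph{Pointwise Jacobian.} By the absolute continuity hypothesis, the vertex paths $v_i$ are differentiable for almost every $t$. At any such $t$, and for every $u$,
\[
  \partial_u\Gamma_i=e_i(t),\qquad
  \partial_t\Gamma_i=\dot v_i+u(\dot v_{i+1}-\dot v_i)=(1-u)\dot v_i+u\dot v_{i+1}.
\]
Taking the norm of the cross product gives exactly the integrand in the statement. Summing over $i$ then yields the displayed formula.

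\emph{Main obstacle: regularity.} The definition of $A$ via the area formula was stated for Lipschitz maps, but an absolutely continuous vertex path need not be Lipschitz. I would handle this as follows. First, $\Gamma_i$ is absolutely continuous in $t$ uniformly in $u$, and affine in $u$. Second, its weak partial derivatives are the classical ones computed above, they exist for almost every $(u,t)$, and they lie in $L^1$. Hence the integrand is in $L^1$, so the right-hand side is finite, and it is the natural meaning of $J_2\Gamma$ in this setting.

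If one insists on the Lipschitz definition, reparametrize time by arclength of the vertex path, $\tau(t)=\int_0^t\sum_i|\dot v_i|$. This makes the path Lipschitz, and the monotone time-reparametrization invariance then transfers the formula back to $t$. The same reparametrization also requires checking that any intervals where $\tau$ is constant contribute zero area. This holds because $\partial_t\Gamma_i=0$ almost everywhere on such intervals.

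The phrase ``counting multiplicity'' needs no separate argument. The parametrized integral never identifies overlapping pieces of the trace, so the formula automatically agrees with the area of the piecewise-linear trace counted with multiplicity.
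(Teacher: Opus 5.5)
Your proposal is correct and follows the paper's proof: you compute $\partial_u\Gamma_i=e_i$ and $\partial_t\Gamma_i=(1-u)\dot v_i+u\dot v_{i+1}$ at almost every $t$, take the Jacobian face by face, and sum over the $N$ faces with multiplicity. Your regularity discussion is more careful than the paper, which simply cites the Lipschitz area formula even though absolutely continuous vertex paths need not be Lipschitz. Note only that transferring the formula back from the arclength time $\tau$ uses the change-of-variables formula for absolutely continuous monotone maps, not the paper's remark on $C^1$ time reparametrizations.
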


\begin{proof}
For almost every $t$, the derivatives of the vertex functions exist.  On the $i$-th ruled face,
\[
  \partial_u\Gamma_i=e_i(t),\qquad
  \partial_t\Gamma_i=(1-u)\dot v_i(t)+u\dot v_{i+1}(t).
\]
The area formula for Lipschitz maps \cite[Section~3.2.3]{Federer} gives the stated expression for the parametrized area of each face.  Summing over the finitely many faces gives the formula; possible overlaps of faces are counted with multiplicity, as in the definition of $A(\Gamma)$.
\end{proof}

\begin{theorem}[Orbit-distance control on a polygonal stratum]
\label{thm:polygonal-metric}
There is a constant $c=c(N,\Lambda,\delta,\eta)>0$ such that, for all $x_i=[V_i]$ in the same restricted path component of $Y_{\Lambda,N}^{K}(\delta,\eta)$,
\[
  d_{\Lambda,N}^{K,Y_{\Lambda,N}^{K}(\delta,\eta)}(x_0,x_1)
  \ge c\,\dist_{\mathrm{orb}}(x_0,x_1),
\]
where
\[
  \dist_{\mathrm{orb}}([V_0],[V_1])
  =\inf_{g\in\Isom^+(\R^3)}\|V_0-gV_1\|
\]
is the Euclidean orbit distance of the labelled vertex vectors.  In particular, the restricted swept-area pseudometric is a genuine metric.
\end{theorem}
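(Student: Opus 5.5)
Since the statement is a lower bound for an infimum over restricted paths, it suffices to prove a pointwise (infinitesimal) inequality along any admissible path in the stratum and then integrate. Fix an absolutely continuous labelled vertex path $V(t)$, $t\in[0,1]$, staying in $Y_{\Lambda,N}^{K}(\delta,\eta)$, with $V(0)=V_0$ and $V(1)=gV_1$ for some $g$. We will show that for almost every $t$ there exists an infinitesimal rigid motion $\xi(t)=(\Omega(t),b(t))\in\mathfrak{se}(3)$ such that
\[
  \sum_{i=0}^{N-1}\int_0^1\bigl|e_i\times((1-u)\dot v_i+u\dot v_{i+1})\bigr|\,du
  \;\ge\; c\,\|\dot V(t)-\xi(t)\cdot V(t)\| .
\]
The left side is the integrand of \cref{lem:polygonal-area-formula}. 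The right side is the length of the velocity projected orthogonally to the Euclidean orbit, i.e.\ the speed in the quotient metric. Integrating in $t$ gives $A(\Gamma)\ge c\cdot(\text{quotient length of the path})\ge c\,\dist_{\mathrm{orb}}(x_0,x_1)$. Here we use that the quotient length dominates the orbit distance: $\dist_{\mathrm{orb}}$ is the induced distance of a Riemannian submersion, and one can construct $h(t)$ with $\dot h$ cancelling $\xi$, so that $\|V_0-h(1)^{-1}V(1)\|\le\int\|\dot V-\xi V\|$. Taking the infimum over paths and over $g$ gives the theorem. Separation is then immediate, since $\dist_{\mathrm{orb}}$ vanishes only on equal orbits (the isometry group acts properly).

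For the infimum inequality, we first scale out time-dependence. Both sides are positively $1$-homogeneous in $\dot V$, so it suffices to bound the ratio on the unit sphere of the normal space to the orbit, uniformly over configurations $V$ in the stratum modulo isometries. This configuration set is compact: edge lengths lie in $[\delta,\Lambda]$, angles lie in $[\eta,\pi-\eta]$, and translations and rotations are quotiented, so the set is closed and bounded. The left side is continuous in $(V,\dot V)$. It therefore suffices to show that the left side vanishes only when $\dot V$ is an infinitesimal rigid motion, after which compactness yields the constant $c(N,\Lambda,\delta,\eta)$. A continuity point to check is that the minimum over $\xi$ varies continuously, which holds because the orbit dimension is constant, equal to $6$ for a non-collinear polygon.

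The main obstacle is the kernel characterization. Suppose the integrand vanishes for every edge $i$ and every $u$. Taking $u=0$ and $u=1$ shows that $\dot v_i$ and $\dot v_{i+1}$ are both parallel to $e_i$. At a vertex $v_i$, the velocity $\dot v_i$ must then be parallel to both $e_{i-1}$ and $e_i$. These edges are non-parallel because $\theta_i\ge\eta>0$ and $\theta_i\le\pi-\eta$, so $\dot v_i=0$ for all $i$, and the kernel is actually trivial, not just rigid motions. This is surprising, because the area integrand is not invariant under rigid motions: a rotation sweeps area. So I would instead compare directly with $\|\dot V\|$ and prove $\text{LHS}\ge c\|\dot V\|$ by the same compactness argument on the full unit sphere in $\R^{3N}$, which immediately dominates the orbit-projected speed. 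This makes the argument cleaner; the only real work is verifying the kernel computation and the uniform compactness, where the lower angle bound $\eta$ rules out parallel adjacent edges and the upper bound keeps the configuration space away from degenerate hairpins.
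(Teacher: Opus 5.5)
Your final argument is the paper's proof: the swept-area seminorm $\mathfrak a_V$ has trivial kernel because each vertex velocity would have to be parallel to two non-parallel adjacent edges, compactness of the normalized edge data times the unit sphere then gives $\mathfrak a_V(W)\ge c\|W\|$, and you finish by integrating along the path. The detour through orbit-projected speed and Riemannian-submersion length is unnecessary, since directly $\int_0^1\|\dot V(t)\|\,dt\ge\|gV_1-V_0\|\ge\dist_{\mathrm{orb}}(x_0,x_1)$.
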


\begin{proof}
Write a polygon as a vertex vector
\[
  V=(v_0,\ldots,v_{N-1})\in (\R^3)^N.
\]
For a tangent vector $W=(w_0,\ldots,w_{N-1})$, set $e_i=v_{i+1}-v_i$ and define the infinitesimal swept-area seminorm
\[
  \mathfrak a_V(W)=
  \sum_{i=0}^{N-1}\int_0^1
  \left|e_i\times\bigl((1-u)w_i+u w_{i+1}\bigr)\right|\,du.
\]
By Lemma~\ref{lem:polygonal-area-formula}, every absolutely continuous polygonal path $V(t)$ has trace area
\[
  A(\Gamma)=\int_0^1 \mathfrak a_{V(t)}(\dot V(t))\,dt .
\]
If $\mathfrak a_V(W)=0$, then for each edge the vector
$(1-u)w_i+u w_{i+1}$ is parallel to $e_i$ for almost every $u$.  In
particular, $w_i$ is parallel to both adjacent edge directions $e_{i-1}$
and $e_i$.  Since the exterior angle at every vertex lies in
$[\eta,\pi-\eta]$, these two directions are not parallel.  Hence
$w_i=0$ for every $i$, so $\mathfrak a_V$ is a norm on the full labelled vertex space.

After translating $v_0$ to the origin, the set of possible edge data is compact: every edge length lies in $[\delta,\Lambda]$, the total length is at most $\Lambda$, and the angle bounds are closed.  The value of $\mathfrak a_V(W)$ is independent of a common translation of $V$ and depends continuously on $(V,W)$.  Compactness of the normalized edge data times the unit sphere in $(\R^3)^N$, together with the trivial-kernel argument above, therefore gives a constant $c=c(N,\Lambda,\delta,\eta)>0$ such that
\[
  \mathfrak a_V(W)\ge c\|W\|
\]
for all labelled velocity vectors $W$.  If $V(t)$ is a polygonal admissible path whose entire quotient image remains in the stratum, then
\[
  A(\Gamma)= \int_0^1 \mathfrak a_{V(t)}(\dot V(t))\,dt
  \ge c\int_0^1\|\dot V(t)\|\,dt
  \ge c\|V(1)-V(0)\|.
\]
For a path used in the quotient cost, the terminal vector has the form $gV_1$.  Hence
\[
  A(\Gamma)\ge c\|gV_1-V_0\|.
\]
Taking the infimum first over paths and then over $g$ gives the asserted orbit-distance estimate.  The orbit distance is positive for distinct labelled configurations modulo Euclidean isometry: the optimal translation is bounded and the rotational group is compact.  This proves non-degeneracy.
\end{proof}

\begin{remark}
The angle bounds in \cref{thm:polygonal-metric} remove tangential degeneracies coming from parallel adjacent edges.  The theorem is a fixed-$N$ statement for labelled polygons.  If subdivision or the number of vertices is allowed to change, different labelled descriptions of the same geometric curve should either be identified from the outset or treated by an explicit zero-label-cost quotient.
\end{remark}

\begin{remark}
The relation between $d_{\Lambda,N}^{K}$ and $d_{\Lambda}^{K}$ as $N\to\infty$ is a natural approximation problem.
A full convergence theorem requires compatibility between polygonal thickness, length, and the swept-area functional.
We leave the most general form of this question open, but the polygonal model provides a controlled finite-dimensional framework for computations and examples.
\end{remark}

\section{Swept-area weighted Reidemeister graphs}
\label{sec:weighted-reidemeister}

We finally explain the diagrammatic counterpart of the swept-area construction.  This section is designed as a metric refinement of the ropelength-filtered lifted Reidemeister graphs introduced in \cite{OzawaFiniteKnotTheory}.
Fix a generic projection direction $u\in S^{2}$.
The construction is made in the framed space $\widetilde Y_\Lambda(K)$: a Euclidean moduli class does not determine a diagram for a fixed ambient direction $u$.  A representative $\gamma\in\widetilde Y_{\Lambda}(K)$ determines a knot diagram $D_{u}(\gamma)$ whenever the projection is generic.

For the purposes of this paper, define the ropelength-filtered lifted Reidemeister graph
\[
  \calG_{\Lambda,u}^{\operatorname{lift}}(K)
\]
as follows.  A vertex is an equivalence class of generic diagrams $D_u(\gamma)$ obtained from representatives $\gamma\in\widetilde Y_\Lambda(K)$, where diagrams are taken up to planar isotopy preserving crossing data.  An edge is not merely a pair of vertices; it is an equivalence class of lifted one-move transitions.  More precisely, an edge from $D$ to $D'$ is represented by a triple $(\gamma_0,\Gamma,\gamma_1)$ such that $\gamma_0,\gamma_1\in\widetilde Y_\Lambda(K)$ project to $D,D'$, respectively, and $\Gamma$ is a $\Lambda$-admissible isotopy for which the projected diagram changes by exactly one Reidemeister move and is generic except at the single corresponding critical time.  Two such triples represent the same edge if they differ by planar isotopy of the endpoint diagrams and by a one-parameter family $(\gamma_0^v,\Gamma^v,\gamma_1^v)$, $v\in[0,1]$, of $\Lambda$-admissible lifted one-move transitions of the same Reidemeister type, with the projected diagrams varying by planar isotopy at the endpoints.  Thus the homotopy in the edge relation is itself required to stay inside the class of lifted one-move transitions.  This self-contained definition is a streamlined version of the lifted Reidemeister graphs considered in \cite{OzawaFiniteKnotTheory}.

The swept-area functional gives weights to such lifted transitions.

\begin{definition}[Swept-area weight of a lifted transition]
Let $e$ be an edge of $\calG_{\Lambda,u}^{\operatorname{lift}}(K)$ connecting diagrams $D$ and $D'$.
Define its swept-area weight by
\[
  w_{\Lambda,u}(e)=\inf_{\Gamma} A(\Gamma),
\]
where the infimum is taken over all $\Lambda$-admissible isotopies realizing the corresponding lifted Reidemeister transition from $D$ to $D'$.
\end{definition}

This turns the lifted Reidemeister graph into a weighted graph, as schematically shown in Figure~\ref{fig:weighted-reidemeister}.

\begin{remark}[Dependence on choices]
The graph $\calG_{\Lambda,u}^{\operatorname{lift}}(K)$ is a framed auxiliary object depending on the projection direction $u$ and on the chosen equivalence relation on lifted transitions.  The weight of an edge is nevertheless well-defined by the preceding definition, because the edge itself includes the lifted transition class and the weight is the infimum over all admissible realizations in that class.  This paper does not claim independence of $u$ or invariance under changing the ambient Euclidean frame.  A quotient-level invariant would require an additional infimum over frames.
\end{remark}
The associated path pseudometric is defined by
\[
  d_{\Lambda,u}^{\operatorname{diag}}(D,D')=
  \inf\left\{\sum_{e\in p}w_{\Lambda,u}(e)\mid p\text{ is a path from }D\text{ to }D'\right\}.
\]

For exact endpoint representatives with generic endpoint diagrams, let
$\mathscr I_{\Lambda,u}^{\mathrm{gen}}(\gamma_0,\gamma_1)$ be the set of
$\Lambda$-admissible isotopies whose $u$-projections are generic except at
finitely many Reidemeister times.  Define
\[
  \widetilde d_{\Lambda,u}^{K,\mathrm{gen}}(\gamma_0,\gamma_1)
  =\inf_{\Gamma\in
  \mathscr I_{\Lambda,u}^{\mathrm{gen}}(\gamma_0,\gamma_1)}A(\Gamma).
\]
The value is $+\infty$ if no such isotopy exists.

\begin{theorem}[Diagrammatic distance and generic geometric cost]
\label{thm:diagrammatic-bound}
Let $\gamma_0,\gamma_1\in\widetilde Y_\Lambda(K)$ have generic endpoint diagrams.  Then
\[
  d_{\Lambda,u}^{\operatorname{diag}}(D_u(\gamma_0),D_u(\gamma_1))
  \le
  \widetilde d_{\Lambda,u}^{K,\mathrm{gen}}(\gamma_0,\gamma_1).
\]
\end{theorem}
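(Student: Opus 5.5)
The plan is to show that every generic admissible isotopy $\Gamma\in\mathscr I^{\mathrm{gen}}_{\Lambda,u}(\gamma_0,\gamma_1)$ yields a path in $\calG^{\operatorname{lift}}_{\Lambda,u}(K)$ from $D_u(\gamma_0)$ to $D_u(\gamma_1)$ whose total weight is at most $A(\Gamma)$. Taking the infimum over $\Gamma$ then gives the inequality. If $\mathscr I^{\mathrm{gen}}_{\Lambda,u}(\gamma_0,\gamma_1)$ is empty, the right-hand side is $+\infty$ and there is nothing to prove.

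Fix such a $\Gamma$, and let $0<t_1<\dots<t_k<1$ be its finitely many Reidemeister times. Choose intermediate times $0=\tau_0<\tau_1<\dots<\tau_k=1$ with $t_j\in(\tau_{j-1},\tau_j)$. Every slice $\Gamma_{\tau_j}$ lies in $\widetilde Y_\Lambda(K)$ and has a generic projection, so it determines a vertex $D_j=D_u(\Gamma_{\tau_j})$. Here $D_0=D_u(\gamma_0)$ and $D_k=D_u(\gamma_1)$.

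Let $\Gamma^{(j)}$ be the restriction of $\Gamma$ to $[\tau_{j-1},\tau_j]$, linearly reparametrized to $[0,1]$. It is $\Lambda$-admissible, since its slices are slices of $\Gamma$. It remains Lipschitz after the linear time change, and its projection is generic except at the single time corresponding to $t_j$, where exactly one Reidemeister move occurs. Hence $(\Gamma_{\tau_{j-1}},\Gamma^{(j)},\Gamma_{\tau_j})$ represents an edge $e_j$ from $D_{j-1}$ to $D_j$.

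By the definition of the weight as an infimum over realizations of the transition, $w_{\Lambda,u}(e_j)\le A(\Gamma^{(j)})$. Time reparametrization invariance and additivity of swept area under concatenation (the basic properties lemma of \cref{sec:swept-area}) give $\sum_j A(\Gamma^{(j)})=A(\Gamma)$. Therefore
\[
d^{\operatorname{diag}}_{\Lambda,u}(D_0,D_k)\le\sum_{j=1}^k w_{\Lambda,u}(e_j)\le A(\Gamma).
\]
In the case $k=0$, the diagram changes only by planar isotopy, so $D_0=D_k$ as vertices and the diagrammatic distance is $0$.

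The main obstacle is the bookkeeping at the Reidemeister times, for two reasons. First, the definition of "generic except at finitely many Reidemeister times" must be read so that each critical time carries exactly one Reidemeister move. If several moves occur at the same time, or a degeneracy that is not a single Reidemeister move appears, the interval cannot be split into single-move transitions. I would build this single-move requirement into the meaning of $\mathscr I^{\mathrm{gen}}_{\Lambda,u}$. Second, the diagram must remain constant up to planar isotopy between consecutive critical times. This follows because the projection stays generic on those open intervals, so the crossing data vary continuously, and it ensures that the endpoint of $e_j$ and the start of $e_{j+1}$ are the same vertex $D_j$.
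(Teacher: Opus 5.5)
Your proof is correct and follows the paper's argument: split the generic isotopy into single-move pieces, bound each edge weight by the swept area of its piece, and sum to get a weighted path of cost at most $A(\Gamma)$. The only cosmetic difference is that you partition all of $[0,1]$, so the piece areas sum exactly to $A(\Gamma)$. The paper instead takes small disjoint intervals around the critical times and discards the diagrammatically constant intervals, using nonnegativity of the integrand.
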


\begin{proof}
Let $\Gamma$ be an isotopy in the class defining the right-hand side, and choose pairwise disjoint time intervals, one around each Reidemeister critical time, small enough that each chosen interval contains exactly one Reidemeister critical time and the projected family realizes exactly one Reidemeister move on that interval.  The restriction of $\Gamma$ to such an interval represents a lifted one-move transition, hence the corresponding edge weight is at most the swept area of that restricted trace.  The complementary generic intervals may have positive geometric swept area, but they do not change the diagram and are not recorded as Reidemeister edges.  The sum of the edge weights is at most the sum of the swept areas of the selected intervals, and this latter sum is at most $A(\Gamma)$.  These edges form a path from $D_u(\gamma_0)$ to $D_u(\gamma_1)$.  Taking the infimum over the stated generic class proves the inequality.
\end{proof}

\begin{corollary}[Conditional comparison with unrestricted cost]
\label{cor:conditional-diagrammatic-bound}
If the pair $(\gamma_0,\gamma_1)$ has the generic-approximation property
\[
  \widetilde d_{\Lambda,u}^{K,\mathrm{gen}}(\gamma_0,\gamma_1)
  =\widetilde d_\Lambda^K(\gamma_0,\gamma_1),
\]
then
\[
  d_{\Lambda,u}^{\operatorname{diag}}(D_u(\gamma_0),D_u(\gamma_1))
  \le\widetilde d_\Lambda^K(\gamma_0,\gamma_1).
\]
\end{corollary}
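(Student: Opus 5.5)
This corollary follows by substituting the hypothesis into \cref{thm:diagrammatic-bound}. The first step is to record that both endpoint diagrams $D_u(\gamma_0)$ and $D_u(\gamma_1)$ are generic. This genericity is implicit in the statement, since the diagrammatic distance is only defined between vertices of $\calG_{\Lambda,u}^{\operatorname{lift}}(K)$, and it is exactly the standing hypothesis of \cref{thm:diagrammatic-bound}. Applying that theorem to the pair $(\gamma_0,\gamma_1)$ gives
\[
  d_{\Lambda,u}^{\operatorname{diag}}(D_u(\gamma_0),D_u(\gamma_1))
  \le \widetilde d_{\Lambda,u}^{K,\mathrm{gen}}(\gamma_0,\gamma_1).
\]

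The second step is to use the generic-approximation property to replace the right-hand side by $\widetilde d_\Lambda^K(\gamma_0,\gamma_1)$, which gives the claimed inequality. The computation is carried out in the extended reals $[0,+\infty]$, so no separate case is needed. If the two representatives are not $\Lambda$-admissibly connected, both sides of the hypothesis equal $+\infty$ and the conclusion is trivial. If they are connected, the hypothesis asserts equality of two finite numbers.

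I would also remark why the hypothesis is needed. The inequality $\widetilde d_\Lambda^K\le\widetilde d_{\Lambda,u}^{K,\mathrm{gen}}$ holds automatically, because $\mathscr I_{\Lambda,u}^{\mathrm{gen}}(\gamma_0,\gamma_1)$ is a subclass of all $\Lambda$-admissible isotopies. The content of the hypothesis is therefore the reverse inequality. That reverse inequality would require perturbing an arbitrary Lipschitz admissible isotopy into one that is diagrammatically generic while keeping thickness at least $1$, length at most $\Lambda$, and swept area nearly unchanged. Proving this is the genuine analytic obstacle, and it is why the paper states the comparison conditionally. Within the corollary itself there is no obstacle beyond this substitution.
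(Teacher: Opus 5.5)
Your proposal is correct and matches the paper: the corollary is stated without a separate proof, because it is exactly \cref{thm:diagrammatic-bound} with the generic-approximation hypothesis substituted into the right-hand side. Your side remark is also accurate, and consistent with the paper's remark on closed constraint levels: $\widetilde d_\Lambda^K\le\widetilde d_{\Lambda,u}^{K,\mathrm{gen}}$ holds automatically, so the real content of the hypothesis is the reverse inequality.
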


\begin{remark}
The distinction between generic and unrestricted cost is necessary at a closed constraint level.  Standard transversality suggests generic approximation when an isotopy has uniform slack in both thickness and length, but a perturbation of a path touching $\Thi=1$ or $\Len=\Lambda$ need not remain admissible.  The weighted diagrammatic graph can also underestimate geometric cost because it records only the selected Reidemeister intervals and ignores motion during diagrammatically constant intervals.
\end{remark}

\begin{example}[Calibration of an R1 transition]
For a fixed lifted Reidemeister I realization with endpoints $\gamma_0,\gamma_1$, every exact-endpoint realizing isotopy satisfies
\[
  A(\Gamma)\ge
  |\mathcal A_\Pi(\gamma_1)-\mathcal A_\Pi(\gamma_0)|
\]
for every oriented plane $\Pi$, by \cref{cor:plane-calibration}.  Therefore an edge class satisfies the rigorous bound
\[
  w_{\Lambda,u}(e)\ge
  \inf_{(\gamma_0,\Gamma,\gamma_1)\in e}
  |\mathcal A_\Pi(\gamma_1)-\mathcal A_\Pi(\gamma_0)|.
\]
The infimum is essential because the projected loop area need not be constant over all realizations of an edge class.  With fixed endpoints differing by a round planar loop of radius $r$, the lower bound is $\pi r^2$ and is attained by radial contraction whenever that contraction is admissible.
\end{example}

\begin{remark}[Why diagrammatic distance may underestimate]
If a curve is moved in space while its projection diagram is kept fixed, the geometric swept area can be positive whereas the diagrammatic path has no Reidemeister edge.  Thus the weighted graph measures transition cost rather than all ambient geometric motion.
\end{remark}

\begin{figure}[t]
\centering
\begin{tikzpicture}[node distance=2.2cm, every node/.style={font=\small}, v/.style={draw, circle, minimum size=8mm, fill=gray!8}]
  \node[v] (D1) {$D_1$};
  \node[v, right=of D1] (D2) {$D_2$};
  \node[v, right=of D2] (D3) {$D_3$};
  \node[v, below=1.35cm of D2] (D4) {$D_4$};
  \draw[-{Latex[length=2mm]}, thick] (D1) -- node[above] {$w(e_{12})$} (D2);
  \draw[-{Latex[length=2mm]}, thick] (D2) -- node[above] {$w(e_{23})$} (D3);
  \draw[-{Latex[length=2mm]}, thick] (D1) -- node[below left] {$w(e_{14})$} (D4);
  \draw[-{Latex[length=2mm]}, thick] (D4) -- node[below right] {$w(e_{43})$} (D3);
  \node[align=center, above=0.85cm of D2] {liftable Reidemeister transitions\\weighted by minimal swept area};
  \node[align=center, below=0.25cm of D4] {$d_{\Lambda,u}^{\operatorname{diag}}(D_i,D_j)$ is the shortest weighted-path cost};
\end{tikzpicture}
\caption{A swept-area weighted lifted Reidemeister graph.}
\label{fig:weighted-reidemeister}
\end{figure}

The diagrammatic distance $d_{\Lambda,u}^{\operatorname{diag}}$ is a framed combinatorial shadow of the generic exact-endpoint cost $\widetilde d_{\Lambda,u}^{K,\mathrm{gen}}$.
It records not only the number or type of Reidemeister moves needed to pass between two diagrams, but also the least swept-area cost of realizing those moves by admissible thick deformations.

This suggests a two-level persistence structure:
\[
  \text{ropelength level}
  \quad+\quad
  \text{swept-area cost}.
\]
The first parameter controls which deformations are allowed; the second measures their geometric cost.


\section{Examples and basic estimates}
\label{sec:examples}

In this section we record elementary examples and estimates.  These examples complement the abstract current lower bound with exact computations.

\begin{example}[Translation before quotienting]
\label{ex:translation-circle}
Let $C(s)=(\cos s,\sin s,0)$ be the unit round circle and translate it by distance $h\ge0$ in the $z$-direction:
\[
  \Gamma(s,t)=(\cos s,\sin s,th),
  \qquad 0\le t\le1.
\]
Then
\[
  \partial_s\Gamma=(-\sin s,\cos s,0),
  \qquad
  \partial_t\Gamma=(0,0,h),
\]
and hence
\[
  |\partial_s\Gamma\times\partial_t\Gamma|=h.
\]
Therefore
\[
  A(\Gamma)=\int_0^1\int_0^{2\pi} h\,ds\,dt=2\pi h.
\]
This particular exact-endpoint path has positive swept area.  In the quotient space $Y_\Lambda(U)$, however, the two circles represent the same point and their quotient distance is zero, since the endpoint-alignment infimum may choose identical representatives.  This illustrates the distinction between $\widetilde d_\Lambda^U$ and $d_\Lambda^U$.
\end{example}

\begin{example}[Radial expansion of a round unknot]
\label{ex:radial-expansion}
Let $1\le R$ and consider the radial isotopy of round circles
\[
  \Gamma(s,t)=r(t)(\cos s,\sin s,0),
  \qquad r(0)=1,\quad r(1)=R,
\]
with $r(t)$ nondecreasing.  Then
\[
  |\partial_s\Gamma\times\partial_t\Gamma|=r(t)r'(t),
\]
and therefore
\[
  A(\Gamma)=2\pi\int_0^1 r(t)r'(t)\,dt
  =\pi(R^2-1).
\]
The time slice at radius $r$ has thickness $r$ and length $2\pi r$.  Hence this isotopy is $\Lambda$-admissible whenever $\Lambda\ge2\pi R$.  Consequently
\[
  d_\Lambda^U(C_1,C_R)\le \pi(R^2-1),
  \qquad \Lambda\ge2\pi R,
\]
where $C_r$ denotes the moduli class of the oriented round circle of radius $r$.  The next proposition shows that this upper bound is sharp.
\end{example}

\begin{proposition}[Exact distance between concentric round unknots]
\label{prop:round-circle-exact}
\leavevmode\par\noindent
Let $C_r$ denote the moduli class represented by $s\mapsto r(\cos s,\sin s,0)$ with the standard orientation.  Let $1\le R$ and assume $\Lambda\ge2\pi R$.  Then
\[
  d_\Lambda^U(C_1,C_R)=\pi(R^2-1).
\]
\end{proposition}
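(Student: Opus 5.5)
The proof splits into matching upper and lower bounds.

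For the upper bound, \cref{ex:radial-expansion} already supplies it. The radial isotopy $r(t)=1+t(R-1)$ passes through round circles of radius $r\in[1,R]$. Each slice has thickness $r\ge1$ and length $2\pi r\le 2\pi R\le\Lambda$, and the slices are $C^{1,1}$ and jointly Lipschitz, so the isotopy is $\Lambda$-admissible. Its swept area is $\pi(R^2-1)$, hence $d_\Lambda^U(C_1,C_R)\le\pi(R^2-1)$.

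For the lower bound, I would apply \cref{cor:quotient-area-calibration}, which gives
\[
  d_\Lambda^U(C_1,C_R)\ge\bigl|\,|\AreaVec(\gamma_R)|-|\AreaVec(\gamma_1)|\,\bigr|.
\]
This bound is already valid on the quotient, so the endpoint-alignment infimum over $\Isom^+(\R^3)$ and $\Diff^{1,1}_+(S^1)$ needs no separate handling. It remains to compute the area vector of $\gamma_r(s)=r(\cos s,\sin s,0)$. Here $\gamma_r\times\partial_s\gamma_r=r^2(0,0,1)$, so
\[
  \AreaVec(\gamma_r)=\tfrac12\int_0^{2\pi}r^2e_3\,ds=\pi r^2e_3
\]
and $\mathscr A(C_r)=\pi r^2$. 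The lower bound is therefore $\pi R^2-\pi=\pi(R^2-1)$, which matches the upper bound.

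There is no serious obstacle here, because the calibration corollary does the real work. The point I would double-check is that \cref{prop:area-vector-lipschitz} legitimately applies to Lipschitz admissible isotopies. Its Stokes step needs $\int\Gamma^*d\alpha_n=\int_{\gamma_1}\alpha_n-\int_{\gamma_0}\alpha_n$ for Lipschitz maps of the cylinder. This holds via the current identity $\partial S_\Gamma=T_{\gamma_1}-T_{\gamma_0}$ used in \cref{lem:flat-current-lower-bound}, together with the comass bound $|\Gamma^*\omega_n|\le J_2\Gamma$. Both facts are already recorded earlier in the paper.
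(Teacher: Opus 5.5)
Your proposal is correct and follows the paper's proof: the radial isotopy of \cref{ex:radial-expansion} gives the upper bound, and \cref{cor:quotient-area-calibration} with $\mathscr A(C_r)=\pi r^2$ gives the matching lower bound. Your extra check, that the Stokes step holds for Lipschitz traces via $\partial S_\Gamma=T_{\gamma_1}-T_{\gamma_0}$, is a reasonable addition that the paper leaves implicit.
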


\begin{proof}
The radial isotopy in \cref{ex:radial-expansion} gives the upper bound.  The projected-area vectors of the two classes have norms $\pi$ and $\pi R^2$, respectively.  Hence \cref{cor:quotient-area-calibration} gives
\[
  d_\Lambda^U(C_1,C_R)\ge\pi(R^2-1).
\]
Combining the two bounds proves the equality.  This argument applies to every independent Euclidean alignment of the endpoints, not only to the displayed concentric representatives.
\end{proof}

\begin{remark}
The proof uses the Euclidean-invariant norm of the projected-area vector.  This invariant formulation is necessary because the distance is defined on Euclidean moduli classes.
\end{remark}

\begin{lemma}[Thickness of a centered ellipse]
\label{lem:ellipse-thickness}
Let $E_r$ be the planar ellipse with semi-axes $ra\ge rb>0$.  Its minimum radius of curvature is $r b^2/a$, and every doubly-critical chord has length at least $2rb$.  Consequently
\[
  \Thi(E_r)=\min\{r b^2/a,rb\}=r b^2/a.
\]
\end{lemma}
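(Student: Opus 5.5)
By scaling it suffices to treat $r=1$: thickness is $1$-homogeneous under dilation, and both claimed quantities scale linearly in $r$. So fix $E(s)=(a\cos s,b\sin s,0)$ with $a\ge b>0$. I will use the standard $C^{1,1}$ characterization of thickness recalled in \cref{sec:ropelength-spaces}:
\[
\Thi(E)=\min\bigl\{\text{minimum radius of curvature},\ \tfrac12\,\text{doubly-critical self-distance}\bigr\}.
\]
The proof therefore splits into a curvature computation and a chord analysis. Combining them gives $\min\{b^2/a,\,b\}$, which equals $b^2/a$ because $b\le a$.

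For the curvature, the planar formula gives
\[
\kappa(s)=\frac{ab}{(a^2\sin^2 s+b^2\cos^2 s)^{3/2}}.
\]
The denominator is minimized at $s=0,\pi$, where it equals $b^3$. Hence $\kappa_{\max}=a/b^2$, and the minimum radius of curvature is $b^2/a$.

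For the doubly-critical chords, I take a chord between $p=E(s)$ and $q=E(\sigma)$, with $p\neq q$, that is perpendicular to $E'(s)$ and $E'(\sigma)$. Since the curve is planar and the chord lies in the plane, perpendicularity means that $q-p$ is parallel to the inward or outward normal at both endpoints. Thus $q$ lies on the normal line through $p$, and $p$ lies on the normal line through $q$; the two normals coincide as lines.

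The key step is to show that the only such pairs are the antipodal axis endpoints, $\{(\pm a,0),\ldots\}$ and $\{(0,\pm b),\ldots\}$. These have lengths $2a$ and $2b$, both at least $2b$. For the general case I would use a simpler bound that avoids classifying all such pairs. The normal line at $p$ leaves $p$ in the inward direction, and $q$ is the next intersection with the ellipse. Any chord along an inward normal through the convex region bounded by $E$ has length at least the minimal width of the ellipse, which is $2b$. This holds because a double normal of a convex curve is a chord perpendicular to two parallel supporting lines, namely the tangents at $p$ and $q$, which are parallel since both are perpendicular to the chord. Its length is therefore the width of the ellipse in that direction, and every width is at least $2b$. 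This gives $\operatorname{dcsd}(E)\ge 2b$ for the doubly-critical self-distance, so half of it is at least $b$.

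I expect the main obstacle to be justifying the claim that a doubly-critical chord of a convex planar closed curve has length equal to a width. The argument is that the tangents at both endpoints are orthogonal to the chord, hence parallel, and by convexity they are supporting lines. I must also check that $p\ne q$ excludes degenerate cases; in particular, $p$ and $q$ cannot lie on the same supporting line, since a strictly convex ellipse meets each supporting line only once. The width function of the ellipse, $w(\theta)=2\sqrt{a^2\cos^2\theta+b^2\sin^2\theta}$, has minimum $2b$. The bound $\Thi(E_r)\le rb^2/a$ is already attained by the curvature term, so equality follows.
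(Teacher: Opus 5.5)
Your proof is correct and follows the paper's argument: the minimum curvature radius $rb^2/a$ occurs at the major-axis endpoints, and a doubly-critical chord has parallel supporting tangent lines at its two ends, so its length is a width of the ellipse and hence at least $2rb$. One wording fix: your sentence claiming that \emph{any} chord along an inward normal has length at least $2b$ is false, since near the tip of an elongated ellipse such chords are much shorter than $2b$; your justification, however, only uses chords that are normal at both ends, which is exactly what the lemma needs.
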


\begin{proof}
The curvature radius of the ellipse is minimized at the endpoints of the major axis and equals $r b^2/a$.  For the self-distance term, observe that a doubly-critical chord of a strictly convex plane oval is perpendicular to the tangent lines at both endpoints.  Hence the two tangent lines are parallel support lines for the oval, and the chord length is the width of the oval in the corresponding normal direction.  The minimum width of the ellipse is $2rb$, attained in the minor-axis direction.  Thus the doubly-critical self-distance is at least $2rb$, and its half is at least $rb$.  The standard thickness formula for $C^{1,1}$ curves, as the minimum of curvature radius and one half of the doubly-critical self-distance, gives the result.
\end{proof}

\begin{proposition}[Homothetic planar ellipses]
\label{prop:ellipse-exact}
Let $a\ge b>0$ and let $E_r$ denote the moduli class represented by
\[
  E_r(s)=(ra\cos s,rb\sin s,0),\qquad 0\le s\le2\pi.
\]
Let $1\le R$.  Assume that
\[
  \frac{b^2}{a}\ge 1
  \qquad\text{and}\qquad
  \Lambda\ge R\,\Len(E_1).
\]
Then the homothetic planar isotopy from $E_1$ to $E_R$ is $\Lambda$-admissible, and
\[
  d_\Lambda^U(E_1,E_R)=\pi ab(R^2-1).
\]
More generally, the same equality holds whenever the homothetic planar isotopy stays inside $Y_\Lambda(U)$.
\end{proposition}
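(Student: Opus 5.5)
The plan mirrors the proof of \cref{prop:round-circle-exact}: an explicit homothetic isotopy gives the upper bound, and the quotient calibration of \cref{cor:quotient-area-calibration} gives a matching lower bound.

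First I check admissibility. Consider
\[
  \Gamma(s,t)=r(t)E_1(s),\qquad r(0)=1,\ r(1)=R,\ r'\ge0,
\]
with $r(t)$ running from $1$ to $R$. By \cref{lem:ellipse-thickness}, the slice $E_{r(t)}$ has thickness $r(t)b^2/a$. Since $r(t)\ge1$ and $b^2/a\ge1$, this is at least $1$. Length scales linearly, so $\Len(E_{r(t)})=r(t)\Len(E_1)\le R\Len(E_1)\le\Lambda$. Each slice is a planar ellipse, hence an unknot, and $\Gamma$ is smooth. So $\Gamma$ is $\Lambda$-admissible.

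Next I compute the swept area. We have $\partial_s\Gamma=r(t)E_1'(s)$ and $\partial_t\Gamma=r'(t)E_1(s)$, so
\[
  \partial_s\Gamma\times\partial_t\Gamma=-r\,r'\,E_1\times E_1'.
\]
Here $E_1\times E_1'=(0,0,ab)$ for every $s$, which is the constant areal velocity of the ellipse. Hence
\[
  A(\Gamma)=\int_0^1\int_0^{2\pi} ab\,r r'\,ds\,dt=\pi ab(R^2-1).
\]
This gives $d_\Lambda^U(E_1,E_R)\le\pi ab(R^2-1)$.

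For the lower bound, I compute the projected-area vector:
\[
  \AreaVec(E_r)=\tfrac12\int_0^{2\pi}E_r\times E_r'\,ds=(0,0,\pi abr^2).
\]
Its norm is $\pi ab r^2$. \cref{cor:quotient-area-calibration} then gives $d_\Lambda^U(E_1,E_R)\ge\pi ab(R^2-1)$ for every Euclidean alignment of the endpoints. Together with the upper bound, this proves equality.

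For the more general statement, the same two computations apply verbatim whenever the homothetic path stays in $Y_\Lambda(U)$. The area identity holds for any monotone scaling, and the calibration bound does not depend on admissibility hypotheses at all.

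The only delicate step is the thickness claim. It rests on \cref{lem:ellipse-thickness}, including the convexity argument that doubly-critical chords are widths. Given that lemma, the hypothesis $b^2/a\ge1$ is exactly what is needed, and the rest is routine calculation.
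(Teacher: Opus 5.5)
Your proposal is correct and follows the paper's proof essentially step for step. Admissibility comes from \cref{lem:ellipse-thickness} together with linear scaling of length. The homothetic isotopy gives the upper bound $\pi ab(R^2-1)$, and the norm-difference calibration of \cref{cor:quotient-area-calibration}, with $|\AreaVec(E_r)|=\pi ab r^2$, gives the matching lower bound.
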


\begin{proof}
By \cref{lem:ellipse-thickness},
\[
  \Thi(E_r)=r b^2/a.
\]  The hypothesis $b^2/a\ge1$ therefore implies
$\Thi(E_r)\ge1$ for every $r\in[1,R]$.  Also
$\Len(E_r)=r\Len(E_1)\le R\Len(E_1)\le\Lambda$.  Thus the homothetic
isotopy is $\Lambda$-admissible.

The homothetic isotopy $\Gamma(s,t)=r(t)(a\cos s,b\sin s,0)$ has
\[
  |\partial_s\Gamma\times\partial_t\Gamma|=ab\,r(t)r'(t)
\]
when $r$ is nondecreasing, and hence gives the upper bound
\[
  A(\Gamma)=\pi ab(R^2-1).
\]
The area-vector norms of $E_1$ and $E_R$ are $\pi ab$ and $\pi abR^2$.  Hence \cref{cor:quotient-area-calibration} gives
\[
  d_\Lambda^U(E_1,E_R)\ge\pi ab(R^2-1).
\]
Together with the homothetic upper bound, this proves the equality.
\end{proof}

\begin{remark}
The explicit hypotheses in \cref{prop:ellipse-exact} are only a convenient sufficient condition.  If $b^2/a<1$, one may instead start from a larger scale $r_0\ge a/b^2$ and compare $E_{r_0}$ with $E_{Rr_0}$ under the analogous length bound.  The point of the example is that, whenever the planar homothetic path stays inside the selected ropelength window, the same projected-area calibration gives an exact swept-area distance.
\end{remark}

\begin{lemma}[Constant curvature convex $C^{1,1}$ curves]
\label{lem:c11-unit-circle}
Let $\gamma$ be a closed convex plane $C^{1,1}$ curve parametrized by arclength.  If its curvature satisfies $\kappa=1$ almost everywhere and its length is $2\pi$, then $\gamma$ is a unit round circle up to orientation-preserving Euclidean isometry.
\end{lemma}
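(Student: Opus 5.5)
The plan is to integrate the Frenet-type system for a planar $C^{1,1}$ curve directly. Since $\gamma$ is $C^{1,1}$ and parametrized by arclength, the unit tangent $T=\gamma'$ is Lipschitz, hence absolutely continuous with $T'$ existing almost everywhere. Because $\gamma$ is planar, I write $T(s)=(\cos\theta(s),\sin\theta(s))$. A Lipschitz lift $\theta:[0,2\pi]\to\R$ exists since $T$ is Lipschitz into $S^1$, and then $|\theta'|=|T'|=\kappa=1$ almost everywhere.

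The key step, and the one I expect to be the main obstacle, is showing that $\theta'$ has constant sign almost everywhere. This is where convexity enters. For a closed convex plane curve traversed once, the tangent angle $\theta$ is monotone: a convex curve has a monotone tangent indicatrix, since the tangent turns in one direction and has total turning $\pm2\pi$. I would argue this as follows. For a simple closed convex curve with the standard orientation, the signed curvature $\theta'$ is nonnegative almost everywhere. If $\theta'$ were negative on a set of positive measure, there would be a point of local nonconvexity, where the curve crosses a supporting line. I would make this rigorous through the characterization that every tangent line is a supporting line, so $\theta$ is nondecreasing. Alternatively, the total absolute curvature of a convex closed curve is $2\pi$ (Fenchel's equality case). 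Combined with $\int|\theta'|=\Len=2\pi$ and $\int\theta'=\pm2\pi$ from the rotation index $\pm1$ of a simple closed curve, this forces $\theta'=\pm1$ almost everywhere with a single sign. The second route uses only the stated equality $\kappa=1$ and the Hopf Umlaufsatz, avoiding any fine convexity argument, so I would take it.

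Once $\theta'=\epsilon\in\{\pm1\}$ almost everywhere, absolute continuity gives $\theta(s)=\theta_0+\epsilon s$. Integrating $\gamma'=T$ then gives
\[
  \gamma(s)=\gamma(0)+\epsilon\bigl(\sin(\theta_0+\epsilon s)-\sin\theta_0,\;-\cos(\theta_0+\epsilon s)+\cos\theta_0\bigr),
\]
which is a unit circle traversed once. A rotation and translation in the plane (extended to $\R^3$, an element of $\Isom^+(\R^3)$) move it to $s\mapsto(\cos s,\sin s,0)$. If $\epsilon=-1$, composing with the rotation by $\pi$ about an axis in the plane reverses the planar orientation while remaining orientation preserving in $\R^3$. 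Therefore both signs give the standard unit circle up to $\Isom^+(\R^3)$, which is the claim. Closure is consistent with the computation: length $2\pi$ makes $\theta$ advance by exactly $2\pi$.
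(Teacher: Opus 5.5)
Your proposal is correct and follows essentially the paper's proof: lift the Lipschitz unit tangent to an absolutely continuous angle $\theta$, deduce $\theta'=\pm1$ almost everywhere, and integrate. The paper simply asserts $\theta'=\kappa$ from convexity, tacitly orienting the plane so the curve runs counterclockwise. You instead derive the constant sign of $\theta'$ from the Umlaufsatz via equality in $\bigl|\int\theta'\bigr|\le\int|\theta'|=2\pi$ (which uses only simplicity, not convexity), and you handle the clockwise case explicitly by a rotation by $\pi$ about an in-plane axis.
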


\begin{proof}
For a convex plane $C^{1,1}$ curve, the unit tangent has an absolutely continuous tangent angle $\theta(s)$ with $\theta'(s)=\kappa(s)$ almost everywhere.  Since $\kappa=1$ a.e., after adding a constant we have $\theta(s)=s+\theta_0$ a.e.  Hence $\gamma'(s)=(\cos(s+\theta_0),\sin(s+\theta_0))$ a.e.  Integrating over an interval of length $2\pi$ gives a translate and rotation of the unit circle.
\end{proof}

\begin{proposition}[Rigidity of the ideal unknot]
\label{prop:ideal-unknot-rigidity}
Let $U$ be the unknot.  In the $C^{1,1}$ unit-thickness category,
\[
  I(U)=Y_{2\pi}(U)
\]
consists of a single point, represented by the unit round circle modulo orientation-preserving reparametrizations and Euclidean isometries.  In particular, the ideal stratum of the unknot is rigid.
\end{proposition}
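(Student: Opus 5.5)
The plan is to show that any $C^{1,1}$ closed curve $\gamma$ with $\Thi(\gamma)\ge1$ and $\Len(\gamma)\le2\pi$ is a unit round circle, and then apply \cref{lem:c11-unit-circle}. Once that is done, $I(U)=Y_{2\pi}(U)$ follows from the fact, implicit in that argument, that $\Rop(U)=2\pi$.

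\emph{Step 1 (curvature).} Thickness at least one bounds the local radius of curvature from below, so $\kappa\le1$ almost everywhere. This is the standard fact that $\Thi\le$ minimal curvature radius in the $C^{1,1}$ setting \cite{LSDR,GonzalezMaddocks}.

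\emph{Step 2 (Fenchel and equality).} Fenchel's theorem \cite{Fenchel} gives $\TC(\gamma)=\int\kappa\,ds\ge2\pi$ for every closed curve. Combined with Step 1 we get
\[
  2\pi\le\int_\gamma\kappa\,ds\le\Len(\gamma)\le2\pi.
\]
Hence every inequality is an equality. In particular $\Len(\gamma)=2\pi$ and $\kappa=1$ almost everywhere. This also shows that $Y_\Lambda(U)=\emptyset$ for $\Lambda<2\pi$, and the unit circle shows that $\Rop(U)=2\pi$.

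\emph{Step 3 (equality case of Fenchel).} Equality $\TC=2\pi$ forces $\gamma$ to be a planar convex curve. This is the main obstacle, because the classical equality case is stated for smooth or $C^2$ curves, while here we only have $C^{1,1}$ regularity with $\kappa$ defined almost everywhere. I would handle it through the tantrix. The unit tangent $T$ is Lipschitz, so $\TC$ is the length of the closed curve $T$ in $S^2$. A closed curve on $S^2$ of length $2\pi$ not contained in any open hemisphere must be a great circle; this is the standard lemma used in Fenchel's proof, and it only needs rectifiability. The hemisphere condition holds because $\int T\,ds=0$: if $T$ lay in an open hemisphere $\{x: x\cdot n>0\}$, then $n\cdot\int T\,ds>0$, a contradiction.

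A tantrix running along a great circle $\{x: x\cdot n=0\}$ gives $\gamma'\cdot n=0$, so $\gamma$ is planar. Since $T$ has length exactly $2\pi$, it traverses the great circle monotonically once, so the tangent angle is monotone with total turning $2\pi$. This makes $\gamma$ a simple convex plane curve.

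\emph{Step 4 (conclusion).} Now \cref{lem:c11-unit-circle} applies with $\kappa=1$ a.e. and length $2\pi$, so $\gamma$ is the unit round circle up to orientation-preserving isometry. Both orientations of the round circle are related by a rotation by $\pi$ about a diameter, so there is a single oriented class. Conversely, the unit circle has thickness one and length $2\pi$, so it lies in $I(U)$.

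Rigidity of the stratum then follows, and the swept-area metric on $I(U)$ is trivial.
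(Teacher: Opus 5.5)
Your overall route matches the paper's proof: $\kappa\le1$ from thickness, the chain $2\pi\le\TC(\gamma)\le\Len(\gamma)\le2\pi$, the equality case of Fenchel's theorem to get a convex plane curve, and then \cref{lem:c11-unit-circle}. The paper simply cites Fenchel's equality case. You instead try to justify it for $C^{1,1}$ curves through the tantrix, and that justification rests on a false lemma. A closed curve on $S^2$ of length $2\pi$ lying in no open hemisphere need not be a great circle. For example, two distinct great semicircles from $p$ to $-p$ form a closed curve of length $2\pi$. No open hemisphere contains both $p$ and $-p$, yet the curve is not a great circle. So the consequence you draw from $\int T\,ds=0$, namely ``not in any open hemisphere,'' is too weak to give planarity.

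The fix is to use the closure condition at full strength, together with the correct equality version of the hemisphere lemma: a closed rectifiable curve on $S^2$ of length at most $2\pi$ lies in some closed hemisphere $\{x\cdot n\ge0\}$. Then $0=n\cdot\int T\,ds=\int T\cdot n\,ds$ with $T\cdot n\ge0$ continuous forces $T\cdot n\equiv0$, so $\gamma$ is planar.

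Once you only know that $T$ lies \emph{on} a great circle, your next claim also needs closure. That claim is that length $2\pi$ makes $T$ traverse the circle monotonically once. But an angle function that rises by $\pi$ and then returns also has total variation $2\pi$. To exclude this, suppose the rotation index were $0$. Then the lifted angle $\theta$ is confined to an interval of length at most $\pi$. The condition $\int e^{i\theta}\,ds=0$ then forces $\theta$ to sit at the two endpoints of that interval almost everywhere. This contradicts $|\theta'|=\kappa=1$ a.e., because an absolutely continuous function has zero derivative a.e.\ on its level sets. Hence the rotation index is $\pm1$ and $\theta$ is monotone with $\theta'=\pm1$ a.e. Either \cref{lem:c11-unit-circle} or direct integration then gives the unit circle.

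With this repair, your proof is a more self-contained version of the paper's, which leaves the low-regularity equality case of Fenchel's theorem to the citation.
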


\begin{proof}
The unit circle shows $\Rop(U)\le2\pi$.  Conversely, scale any representative of $U$ to thickness one.  Its curvature is at most one almost everywhere, while Fenchel's theorem gives total curvature at least $2\pi$; hence its length is at least $2\pi$.  Thus $\Rop(U)=2\pi$.

Now choose a representative $\gamma$ of a class in $Y_{2\pi}(U)$.  Then $\Thi(\gamma)\ge1$ and $\Len(\gamma)\le2\pi$.  For a $C^{1,1}$ curve of thickness at least one, the curvature satisfies $\kappa\le1$ almost everywhere; this follows from the curvature-radius part of the standard thickness characterization \cite{LSDR,GonzalezMaddocks}.  Hence its total curvature satisfies
\[
  \operatorname{TC}(\gamma)=\int_\gamma \kappa\,ds\le \Len(\gamma)\le2\pi.
\]
By Fenchel's theorem \cite{Fenchel}, every closed space curve satisfies $\operatorname{TC}(\gamma)\ge2\pi$.  Therefore equality holds throughout:
\[
  \operatorname{TC}(\gamma)=\Len(\gamma)=2\pi,
  \qquad \kappa=1 \text{ a.e.}
\]
The equality case in Fenchel's theorem \cite{Fenchel} implies that $\gamma$
is a convex plane curve.  Lemma~\ref{lem:c11-unit-circle}, together with
$\kappa=1$ a.e. and $\Len(\gamma)=2\pi$, now shows that $\gamma$ is a unit
round circle.  Thus all elements of $Y_{2\pi}(U)$ agree in the stated moduli
space.
\end{proof}

\begin{proposition}[A crude displacement estimate]
Let $\Gamma$ be a smooth isotopy in any fixed regular parametrization.  Then
\[
  A(\Gamma)
  \le
  \int_{0}^{1}\Len(\Gamma_{t})\,\|\partial_{t}\Gamma(\cdot,t)\|_{L^{\infty}}\,dt.
\]
In particular, if $\Len(\Gamma_{t})\le \Lambda$ and $\|\partial_{t}\Gamma(\cdot,t)\|_{L^{\infty}}\le V(t)$, then
\[
  A(\Gamma)\le \Lambda\int_{0}^{1}V(t)\,dt.
\]
\end{proposition}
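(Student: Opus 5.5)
The plan is a pointwise estimate on the integrand followed by integration, first in $s$ and then in $t$. For each fixed $(s,t)$ I would apply the elementary inequality
\[
  |\partial_s\Gamma\times\partial_t\Gamma|\le|\partial_s\Gamma|\,|\partial_t\Gamma|.
\]
This reflects only that the cross product has norm $|v||w|\sin\theta\le|v||w|$. Next I bound $|\partial_t\Gamma(s,t)|$ by its supremum over $s$, namely $\|\partial_t\Gamma(\cdot,t)\|_{L^\infty}$. Since $\Gamma$ is smooth on the compact cylinder, this supremum is finite and measurable in $t$; in fact it is continuous.

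Integrating in $s$ for fixed $t$ then gives
\[
  \int_{S^1}|\partial_s\Gamma\times\partial_t\Gamma|\,ds
  \le\|\partial_t\Gamma(\cdot,t)\|_{L^\infty}\int_{S^1}|\partial_s\Gamma(s,t)|\,ds.
\]
The last integral is exactly $\Len(\Gamma_t)$, because the parametrization of each time slice is regular. The word ``regular'' in the hypothesis is used only here, to ensure that $\int|\partial_s\Gamma|\,ds$ is the length of the image curve and not a multiple of it.

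Integrating in $t$ over $[0,1]$, using Fubini or Tonelli for the nonnegative integrand, yields the first inequality. For the second statement I substitute $\Len(\Gamma_t)\le\Lambda$ and $\|\partial_t\Gamma(\cdot,t)\|_{L^\infty}\le V(t)$ and pull $\Lambda$ out of the integral. Monotonicity of the integral then gives $A(\Gamma)\le\Lambda\int_0^1V(t)\,dt$.

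No step is a genuine obstacle. The only point needing a remark is that the bound depends on the chosen parametrization. The right-hand side uses $\partial_t\Gamma$ in that fixed parametrization, and it can be improved by replacing $\partial_t\Gamma$ with its normal component, since the tangential part drops out of the cross product. That refinement is not needed for the stated inequality. If one wants the estimate for Lipschitz rather than smooth isotopies, the same argument works with derivatives taken almost everywhere and an essential supremum in $s$.
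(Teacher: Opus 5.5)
Your proof is correct and is the same argument as the paper's: the pointwise bound $|\partial_s\Gamma\times\partial_t\Gamma|\le|\partial_s\Gamma|\,|\partial_t\Gamma|$, the $L^\infty$ bound in $s$, the identity $\int_{S^1}|\partial_s\Gamma|\,ds=\Len(\Gamma_t)$, and integration in $t$. One small correction to your side remark: what makes $\int_{S^1}|\partial_s\Gamma|\,ds$ equal to $\Len(\Gamma_t)$ rather than a multiple of it is that each time slice is injective (embedded), not that the parametrization is regular; a regular parametrization can still be a double cover.
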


\begin{proof}
The pointwise cross-product inequality gives
\[
  |\partial_{s}\Gamma\times\partial_{t}\Gamma|
  \le |\partial_{s}\Gamma|\,|\partial_{t}\Gamma|
  .
\]
For fixed $t$, integrate in $s$ and use
$\int_{S^1}|\partial_s\Gamma|\,ds=\Len(\Gamma_t)$, followed by the $L^\infty$ bound on $|\partial_t\Gamma|$.  Integrating in $t$ gives the estimate.
\end{proof}

\begin{remark}
The examples above include a complete computation of $d_\Lambda^K$ for concentric round unknot representatives.  Beyond such calibrated examples, exact computation of $d_\Lambda^K$ is expected to be difficult and is one of the motivations for the estimates and finite-dimensional models discussed in this paper.
\end{remark}

\section{Questions and further directions}
\label{sec:questions}

We close with a short list of problems that mark the boundary between the metric framework established here and the analytic theory that remains to be developed.

\begin{question}[Comparison with the flat-orbit metric]
Theorem~\ref{thm:swept-area-metric} gives
\[
  d_\Lambda^K\ge\Flat_{\mathrm{orb}}.
\]
On which compact regularity classes does a converse estimate, possibly nonlinear or local, hold?  Does swept-area convergence together with the thickness and length bounds force convergence in a stronger topology on unparametrized curves?
\end{question}

\begin{question}[Existence of minimizers]
Under what compactness and regularity hypotheses does the infimum defining
\[
  d_{\Lambda}^{K}(x_{0},x_{1})
\]
admit a minimizing admissible isotopy?  More generally, what is the correct weak compactness theory for swept-area minimizing deformations under simultaneous thickness and length constraints?
\end{question}

\begin{question}[Smooth--polygonal approximation]
Can the polygonal metrics $d_{\Lambda,N}^{K}$ be shown to converge, after an appropriate choice of subdivisions and compactness hypotheses, to the $C^{1,1}$ swept-area metric $d_{\Lambda}^{K}$ as $N\to\infty$?  How should labelled vertices be forgotten so that the finite-dimensional orbit estimates are compatible with the unparametrized limit?
\end{question}

\begin{question}[Static invariants and swept area]
Let $D$ be a Euclidean-invariant, scale-covariant size functional.  Can variations of the density $\rho_D=\Len/D$, the compression radius $\CRad_D=D/\Thi$, diameter, total curvature, or mean-chord densities be bounded in terms of the swept-area distance on a fixed ropelength sublevel component?  Conversely, can differences of such static quantities give robust lower bounds for swept-area distance?
\end{question}

\begin{question}[Diagrammatic approximation]
For a fixed generic projection direction $u$, how closely does the swept-area weighted diagrammatic distance
\[
  d_{\Lambda,u}^{\operatorname{diag}}
\]
approximate the framed generic cost $\widetilde d_{\Lambda,u}^{K,\mathrm{gen}}$?  Under which slack or right-relaxation hypotheses does the generic-approximation property in \cref{cor:conditional-diagrammatic-bound} hold, and how should one optimize over ambient frames to obtain a quotient-level invariant?
\end{question}

These questions suggest that swept-area metrics refine ropelength-filtered deformation theory.  They also link constrained spaces of embeddings, integral currents, finite-dimensional models, and diagrammatic persistence.

\section*{Declaration of generative AI and AI-assisted technologies}

During the preparation of this work, the author used ChatGPT (OpenAI) for
literature organization, language and exposition editing, and preliminary
consistency checking of definitions and proofs.  The author reviewed and
edited the output as needed and takes full responsibility for the content
of the manuscript.


\end{document}